\newcommand{\kom}[1]{}
\renewcommand{\kom}[1]{{\bf [#1]}}
\definecolor{gruen}{cmyk}{1.0,0.2,0.7,0.07}
 \def\1{\raisebox{2pt}{\rm{$\chi$}}}
\def\vint_#1{\mathchoice%
          {\mathop{\kern 0.2em\vrule width 0.6em height 0.69678ex depth -0.58065ex
                  \kern -0.8em \intop}\nolimits_{\kern -0.4em#1}}%
          {\mathop{\kern 0.1em\vrule width 0.5em height 0.69678ex depth -0.60387ex
                  \kern -0.6em \intop}\nolimits_{#1}}%
          {\mathop{\kern 0.1em\vrule width 0.5em height 0.69678ex
              depth -0.60387ex
                  \kern -0.6em \intop}\nolimits_{#1}}%
          {\mathop{\kern 0.1em\vrule width 0.5em height 0.69678ex depth -0.60387ex
                  \kern -0.6em \intop}\nolimits_{#1}}}
\def\vintslides_#1{\mathchoice%
          {\mathop{\kern 0.1em\vrule width 0.5em height 0.697ex depth -0.581ex
                  \kern -0.6em \intop}\nolimits_{\kern -0.4em#1}}%
          {\mathop{\kern 0.1em\vrule width 0.3em height 0.697ex depth -0.604ex
                  \kern -0.4em \intop}\nolimits_{#1}}%
          {\mathop{\kern 0.1em\vrule width 0.3em height 0.697ex depth -0.604ex
                  \kern -0.4em \intop}\nolimits_{#1}}%
          {\mathop{\kern 0.1em\vrule width 0.3em height 0.697ex depth -0.604ex
                  \kern -0.4em \intop}\nolimits_{#1}}}
\newcommand{\aveint}[2]{\mathchoice%
          {\mathop{\kern 0.2em\vrule width 0.6em height 0.69678ex depth -0.58065ex
                  \kern -0.8em \intop}\nolimits_{\kern -0.45em#1}^{#2}}%
          {\mathop{\kern 0.1em\vrule width 0.5em height 0.69678ex depth -0.60387ex
                  \kern -0.6em \intop}\nolimits_{#1}^{#2}}%
          {\mathop{\kern 0.1em\vrule width 0.5em height 0.69678ex depth -0.60387ex
                  \kern -0.6em \intop}\nolimits_{#1}^{#2}}%
          {\mathop{\kern 0.1em\vrule width 0.5em height 0.69678ex depth -0.60387ex
                  \kern -0.6em \intop}\nolimits_{#1}^{#2}}}
\newtheorem{theorem}{Theorem}[section]
\newtheorem{lemma}[theorem]{Lemma}
\newtheorem{proposition}[theorem]{Proposition}
\newtheorem{definition}[theorem]{Definition}
\newtheorem{remark}[theorem]{Remark}
\newtheorem{example}[theorem]{Example}
\newtheorem{claim}{Claim}[section]
  \newcommand{\R}{\mathbb{R}}
\newcommand{\Rn}{\mathbb{R}^n}
\newcommand{\xintloo}[1]{\int\limits_{#1} \kern-18pt\raise4pt\hbox to7pt {\hrulefill}\ }   \numberwithin{equation}{section}
\newcommand{\ec}{\normalcolor}
\def\XXint#1#2#3{{\setbox0=\hbox{$#1{#2#3}{\int}$}
     \vcenter{\hbox{$#2#3$}}\kern-.5\wd0}}
\begin{document}
 \title[Estimates for the $\infty$-Laplacian ]{A priori estimates for the $\infty$-Laplacian relative to Vector Fields}
 
   \author[F. Ferrari]{Fausto Ferrari}
 \address{Fausto Ferrari
 \hfill\break\indent
 Dipartimento di Matematica
\hfill\break\indent
dell'Universit\`a di Bologna,
\hfill\break\indent Piazza di Porta S. Donato, 5,
\hfill\break\indent \ec  40126 Bologna, Italy
\hfill\break\indent \ec  
{\tt fausto.ferrari@unibo.it}}

  \author[J. Manfredi]{Juan J. Manfredi}
 \address{Juan J. Manfredi
 \hfill\break\indent
Department of Mathematics
\hfill\break\indent
University of Pittsburgh
\hfill\break\indent Pittsburgh, PA 15260, USA
\hfill\break\indent
{\tt manfredi@pitt.edu}}

  \date{\today}
\keywords{Infinity-Laplacian, viscosity solutions, vector fields}
\subjclass[2020]{35J94, 35R03, 53B20}\ec
\thanks{F. F. was partially supported by INDAM-GNAMPA 2020 project: {\it Metodi di viscosit\`a e applicazioni a problemi non lineari con debole ellitticit\`a.}}

\maketitle
\begin{center}
\textit{Dedicated to our
friend Bruno Franchi 
} 
\end{center}
\begin{abstract}
In this paper we prove a priori  H\"older and Lipschitz regularity estimates for viscosity solutions  equations governed by the inhomogeneous  infinite Laplace operator relative to a frame of vector fields.

\end{abstract}
{\singlespacing
\tableofcontents        
}
\section{Introduction}
The main results in this manuscript are the a priori local H\"older and Lipschtiz continuity of viscosity solutions to the problem
 \begin{equation}\label{mainequation0}
\sum_{i,j=1}^n X_i X_j u(x) X_iu(x) X_ju(x)    = f (x, u(x), X_1u(x), \ldots X_n u(x)),
   \end{equation}
where $f$ is a real valued continuous functions and $X_1, X_2, \ldots X_n$ are linearly independent  smooth vector fields in $\Rn$. \par
We write $$D_{\mathfrak{X}}u=\sum_{i=1}^n X_i u(x) X_i$$  for  the gradient of the function $u$ relative to the  frame of vector fields 
$\mathfrak{X}=\{X_1, X_2, \dots, X_n\}$.
We consider  $\mathbb{R}^n$ as a Riemannian manifold  with a metric induced by the frame   $\mathfrak{X}$. This frame determines a Riemannian metric $g$ by requiring that $\mathfrak{X}(x)=\{X_1(x), X_2(x), \dots, X_n(x)\}$ is an orthonormal basis for the metric $g_x$ in the tangent space to $\mathbb{R}^n$  at $x$ (which we identify with $\mathbb{R}^n$); that is, we have
$$ g_x(X_i(x), X_j(x))= \delta_{ij}\text{  for  } i, j=1\ldots n.$$ 
 Write \begin{equation}
 X_i(x)=\sum_{j=1}^n a_{ij}(x)\frac{\partial}{\partial x_j}
 \end{equation}
  for smooth functions $a_{ij}(x)$.  Denote by $\mathbb{A}(x)$ the matrix whose
$(i,j)$-entry is $a_{ij}(x)$. We always assume that $\det(\mathbb{A}(x))\not=0$.  Let $\mathbb{G}(x)$ denote the matrix of $g_x$ with respect to the Euclidean coordinates. We then have
\begin{equation}\label{metric} \mathbb{G}(x)= \left(\mathbb{A}^t(x) \mathbb{A}(x)\right)^{-1}.
\end{equation}

  We can write equation \eqref{mainequation0} as 
\begin{equation}\label{inftyharmonic0}
 \Delta_{\mathfrak{X},\infty} u =\left\langle \left(D_\mathfrak{X}^2 u\right)^* D_\mathfrak{X}u, D_\mathfrak{X}u \right\rangle_{g}=f(x,u(x), D_{\mathfrak{X}}u(x))
 \end{equation}
 is the $\infty$-Laplacian relative to the frame  $\mathfrak{X}$, where $g$ is the Riemannian metric determined by $\mathfrak{X}$.
\par

We use the notation $d(x,y)$ for  the Riemannian distance determined by $g$. For a point $x\in \mathbb{R}^n$ the injectivity radius is $i (x)>0$. The metric ball centered at $x$ with radius $r>0$ is denoted by $B_r(x)$.
The gradient  of a smooth function $u\colon\mathbb{R}^n\mapsto\mathbb{R}$ relative to $\mathfrak{X}$  agrees with the Riemannian gradient of the function $u$ (see Lemma \ref{grads} below). 
The $\mathfrak{X}$-second derivative matrix $D_\mathfrak{X}^2 u$ is an $n\times n$ matrix, not necessarily symmetric, with entries $X_i(X_j(u))$. We will consider its symmetrization
$$\left(D_\mathfrak{X}^2 u\right)^*=\frac{ D_\mathfrak{X}^2 u+ (D_\mathfrak{X}^2 u)^t}{2}$$ and note that $\left(D_\mathfrak{X}^2 u\right)^*$ is, in general, different from $\text{Hess}(u)$ the Riemannian Hessian of the function $u$. See 
Example \ref{hessian} below. 
\par
Our starting point is the fact that 
the function
$u(x)=d(x_0,x)$,  which is  smooth in the set $B_{i(x_0)}(x_0)\setminus\{x_0\}$,  satisfies the eikonal equation
\begin{equation}\label{eikonal0}
| D_\mathfrak{X} u |_{g}=1, 
\end{equation}
and it is $\infty$-harmonic
\begin{equation}\label{inftyharmonic0}
 \Delta_{\mathfrak{X},\infty} u =\left\langle \left(D_\mathfrak{X}^2 u\right)^* D_\mathfrak{X}u, D_\mathfrak{X}u \right\rangle_{g}=0.
\end{equation}
See Proposition \ref{dproperties1} below. For more information  about distances and infinity-Laplacians see \cite{BDM09, B10}.
\par We shall also use the fact that $(x,y)\mapsto d^2(x,y)$ is locally smooth, Proposition \ref{dproperties2}. Thus,  functions of the distance are available as test functions for the viscosity formulation of \eqref{mainequation0} that we describe next.
 \begin{definition}\label{viscositytestfunctions0}
   An upper semi-continuous function $u$ is a viscosity subsolution of \eqref{inftyharmonic0} in a domain $\Omega\subset\mathbb{R}^n$  if whenever $\phi\in C^2(\Omega) $ touches $u$ from above at at a point $x_0\in \Omega$ we have 
   $$  \Delta_{\mathfrak{X},\infty} \phi(x_0) \ge f(x_0, \phi(x_{0}), D_{\mathfrak{X}}\phi(x_{0})).$$
 A lower semi-continuous function $v$ is a viscosity supersolution of \eqref{inftyharmonic0} in a domain $\Omega$  if whenever $\phi\in C^2(\Omega) $ touches $v$ from below at at a point $x_0\in \Omega$ we have 
   $$  \Delta_{\mathfrak{X},\infty} \phi(x_0) \le  f(x_0, \phi(x_{0}), D_{\mathfrak{X}}\phi(x_{0})).$$
   \end{definition}
   Recall that $\phi$ touches $u$ from above at $x_0$ means $\phi(x)\le   u(x)$ for all $x$ in a neighborhood of $x_0$ and $\phi(x_0)=u(x_0)$.  To define $\phi$ touches $u$ from  below at $x_0$ just reverse the inequality. \par
   A viscosity solution is both a super- and a subsolution.
  Our main results are  the following:
  \begin{theorem}\label{apriori0} Let $\Omega\subset\mathbb{R}^n$ be a domain and 
 $f\colon \Omega\mapsto\mathbb{R}$ be a  continuous function. Let  $u$ be a viscosity solution of the inhomogeneous $\infty$-Laplace equation
\begin{equation}\label{homogeneous}
  \Delta_{\mathfrak{X},\infty} u(x)=f (x)\text{   in  } \Omega
  \end{equation}Then, the function $u$ is locally Lipschitz continuous. More precisely, for all $x_0\in\Omega$ such that $B_{2\,i(x_0)}(x_0)\subset\Omega$ we have  
\begin{equation}\label{lip1}
|u(x)-u(y)|\le L \, d(x,y),  
\end{equation}  for $x,y\in B_{i(x_0)/4}(0)$,
where  $L$ depends only on $\|f \|_{L^\infty(B_{2\,i(0)})}$, $\|u\|_{L^\infty(B_{2\,i(0)})}$ and the infimum of the injectivity radius on the compact set $ \overline{B_{i(x_0)}(x_0)}$.

\end{theorem}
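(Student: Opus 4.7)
The plan is to prove Theorem \ref{apriori0} by a doubling-of-variables argument in the Ishii--Lions style, adapted to the Riemannian structure induced by $\mathfrak{X}$. Fix $x_0\in\Omega$ with $B_{2i(x_0)}(x_0)\subset\Omega$, set $\rho=i(x_0)$, $K=\|u\|_{L^\infty(B_{2\rho}(x_0))}$, and $F=\|f\|_{L^\infty(B_{2\rho}(x_0))}$. Choose a concave modulus $\omega(t)=t-At^{2}$ on an interval $[0,\rho_0]$ on which $\omega'\ge 1/2$ and $\omega''\equiv -2A$. The goal is to produce $L=L(K,F,\rho)$ such that $u(x)-u(y)\le L\,\omega(d(x,y))$ for all $x,y\in\overline{B_{\rho/4}(x_0)}$, which implies \eqref{lip1} since $\omega(t)\le t$.

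Argue by contradiction with the penalized functional
$$
\Phi(x,y) = u(x) - u(y) - L\,\omega(d(x,y)) - \tfrac{\sigma}{2}\bigl(d(x,x_0)^{2}+d(y,x_0)^{2}\bigr)
$$
on $\overline{B_{\rho/2}(x_0)}\times\overline{B_{\rho/2}(x_0)}$, with $\sigma>0$ a small parameter. If the conclusion fails for every large $L$, then for such $L$ the supremum of $\Phi$ is positive and attained at some $(\hat x,\hat y)$; one has $\hat x\neq\hat y$ (since $\Phi(x,x)\le 0$), $d(\hat x,\hat y)\to 0$ as $L\to\infty$ (since $u$ is bounded while $\omega$ has a linear lower bound), and, upon balancing $\sigma$ against $L$ and $\rho$, $(\hat x,\hat y)$ lies in the interior of $B_{\rho/2}(x_0)\times B_{\rho/2}(x_0)$.

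At $(\hat x,\hat y)$, the function $x\mapsto L\,\omega(d(x,\hat y))+\tfrac{\sigma}{2}d(x,x_0)^{2}$ touches $u$ from above, and $y\mapsto -L\,\omega(d(\hat x,y))-\tfrac{\sigma}{2}d(y,x_0)^{2}+\mathrm{const}$ touches $u$ from below; both are $C^{2}$ in Euclidean coordinates near their contact points by Proposition \ref{dproperties2} together with $\hat x\neq\hat y$, so Definition \ref{viscositytestfunctions0} delivers two viscosity inequalities for $\Delta_{\mathfrak{X},\infty}$. The decisive ingredient is that for any fixed $p$ the function $s_p(x)=d(x,p)$ satisfies $|D_{\mathfrak{X}}s_p|_g\equiv 1$ and $\Delta_{\mathfrak{X},\infty}s_p\equiv 0$ away from $p$ by Proposition \ref{dproperties1}, so the $\mathfrak{X}$-chain rule yields, for every smooth $G$,
$$
\Delta_{\mathfrak{X},\infty}\bigl(G(s_p)\bigr)(x) = G''(s_p(x))\,\bigl(G'(s_p(x))\bigr)^{2}.
$$
Specialized to $G=L\omega$, this identifies the principal part of $\Delta_{\mathfrak{X},\infty}$ on each test function at its contact point as $L^{3}\omega''(d(\hat x,\hat y))(\omega'(d(\hat x,\hat y)))^{2}$, modulo cross terms from the $\sigma$-penalty. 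Subtracting the two viscosity inequalities gives
$$
2L^{3}\,\omega''\bigl(d(\hat x,\hat y)\bigr)\bigl(\omega'(d(\hat x,\hat y))\bigr)^{2} \le 2F + \mathrm{err}(L,\sigma),
$$
and since the left side is $\le -L^{3}A/2$ by the choice of $\omega$, a contradiction arises once $L$ is large and $\sigma$ is chosen so that $\mathrm{err}(L,\sigma)<L^{3}A/2$.

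The main obstacle is the careful bookkeeping of the cross terms produced when expanding $\Delta_{\mathfrak{X},\infty}$ on the sum of the large-$L$ distance term and the small-$\sigma$ quadratic penalty: because the operator is cubic in the gradient and linear in the Hessian, mixed contributions scaling like $L^{2}\sigma$, $L\sigma^{2}$, etc.\ appear, and $\sigma$ must be matched to a suitable negative power of $L$ to absorb them without destroying interiority of the maximum. A parallel technical point---the place where the hypothesis $B_{2i(x_0)}(x_0)\subset\Omega$ is actually used---is to check that the contact points remain within the injectivity radii of $\hat y$ and $\hat x$, so that $d(\cdot,\hat y)$ and $d(\hat x,\cdot)$ are legitimate $C^{2}$ test functions there.
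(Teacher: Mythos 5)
Your proposal takes a genuinely different route from the paper --- an Ishii--Lions doubling-of-variables argument --- whereas the paper's own proof of Theorem \ref{apriori0} is much more elementary: it adds an auxiliary coordinate $x_{n+1}$ and vector field $X_{n+1}=\partial_{x_{n+1}}$, replaces $u$ by $v=u+c|x_{n+1}|^{4/3}$ so that the new right-hand side $f+c^3(4/3)^3$ is nonnegative, observes that $v$ is then a subsolution of the homogeneous equation, and obtains the Lipschitz bound by comparison of $u-u(x_0)$ with the cone $M_r\,d(x_0,\cdot)/r$ on punctured balls. The chain-rule identity in your proposal, $\Delta_{\mathfrak{X},\infty}(G(s_p))=G''(s_p)(G'(s_p))^2$ for $s_p=d(\cdot,p)$, is correct (it follows from $|D_\mathfrak{X}s_p|_g=1$ and $(D^2_\mathfrak{X}s_p)^*D_\mathfrak{X}s_p=0$), and the overall architecture is reasonable, but the argument has a concrete gap.

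The gap is in the ``bookkeeping of the cross terms'' which you acknowledge but do not close. Writing $a=L\omega(d(\cdot,\hat y))$ and $b=\tfrac{\sigma}{2}d(\cdot,x_0)^2$, the expansion of $\Delta_{\mathfrak{X},\infty}(a+b)(\hat x)$ contains the term $L\,\omega'\langle (D^2_{\mathfrak{X}}s)^*D_{\mathfrak{X}}b,\,D_{\mathfrak{X}}b\rangle_g$ with $s=d(\cdot,\hat y)$. Proposition \ref{boundford2} gives $|(D^2_{\mathfrak{X}}s)^*|_g\le C_1/d(\hat x,\hat y)$, and this bound is sharp (in the Euclidean model $D^2s=\tfrac1d(I-e\otimes e)$), so this cross term is genuinely of size $L\sigma^2/d$. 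Your main term is $L^3\omega''(\omega')^2\approx -A L^3$, which does \emph{not} scale with a negative power of $d$; meanwhile the constraint coming from positivity of the sup only gives $Ld\le C$, an \emph{upper} bound on $d$, so $L\sigma^2/d$ can be arbitrarily large compared with $L^3$ as $L\to\infty$. Worse, the tangential Hessian of distance is positive semidefinite, so this cross term is on the wrong side of the inequality and the contradiction need not arise. Taking $\sigma\to 0$ as a power of $L$, as you suggest, does not repair this, because there is no lower bound on $d(\hat x,\hat y)$ in terms of $L$. Note that the paper's own doubling argument (Theorem \ref{apriori1}) succeeds precisely because it uses $d^\alpha$ with $\alpha<1$: there the main term scales as $L^3 d^{3\alpha-4}$ while the analogous cross term scales as $Ld^{\alpha-2}$, and $(Ld^\alpha)$ bounded makes the ratio $\sim d^2\to 0$; at $\alpha=1$ the factor $\alpha-1$ kills the paper's main term, which is exactly why the Lipschitz case is handled there by a completely different (and much simpler) mechanism.

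Two smaller remarks. First, you invoke only Definition \ref{viscositytestfunctions0} rather than the Theorem of Sums (Theorem \ref{sums}); the direct test-function approach gives strictly less information than the coupled matrix inequality, and this is part of why the bad cross term cannot be tamed. Second, the statement that ``both are $C^2$ in Euclidean coordinates near their contact points by Proposition \ref{dproperties2}'' should instead cite Proposition \ref{dproperties1} for the smoothness of $d(\cdot,\hat y)$ away from $\hat y$; Proposition \ref{dproperties2} concerns $d^2$.
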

 \begin{theorem}\label{apriori1} Let $\Omega\subset\mathbb{R}^n$ be a domain and 
  $f\colon \Omega\times\mathbb{R}\times \mathbb{R}^{n}\mapsto\mathbb{R}$ be a  continuous function  satisfying the condition
\begin{equation}\label{gradproblem}
|f(x,p,\xi)| \le C_{2} |\xi|_g^{\beta}+C_{3},
\end{equation}
where $0\le \beta<4$  and $C_{2}, C_{3}$ are nonnegative constants. 
Let $u$ be a viscosity solution of the inhomogeneous $\infty$-Laplace equation
$$  \Delta_{\mathfrak{X},\infty} u(x)=f (x, u(x), D_{\mathfrak{X}} u(x) )\text{   in  } \Omega.$$ Then, the function $u$ is locally H\"older continuous with exponent
$\alpha<\min\{1, \frac{4-\beta}{3}\}$. More precisely, for $x_0\in \Omega$ such that  $B_{2\,i(x_0)}(x_0)\subset\Omega$ and  $x,y\in B_{i(x_0)/4}(x_0)$ we have
\begin{equation}\label{lip1}
|u(x)-u(y)|\le L_{1} \, d(x,y)^{\alpha},  
\end{equation}
where  $L_{1}$ depends only on $\|u \|_{L^\infty(B_{i(x_0)} (x_0))}$, the constants $C_2$ and $C_3$, the exponen $\beta$,  the injectivity radius $i(x_0)$, and a constant $C(\overline{B_{2\,i(x_0)}(x_0)},g)$ depending only on the metric $g$ and the compact set $ \overline{B_{2\,i(x_0)}(x_0)}$.

\end{theorem}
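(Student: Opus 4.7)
The plan is to argue by contradiction using the classical doubling-of-variables (Ishii-Lions) method adapted to the Riemannian setting determined by $\mathfrak X$. Fix $x_0\in\Omega$ with $B_{2i(x_0)}(x_0)\subset\Omega$, and suppose that \eqref{lip1} fails for some pair in $B_{i(x_0)/4}(x_0)$ and some constant $L_1$ yet to be chosen. Introduce the auxiliary two-point function
\[
\Psi(x,y)=u(x)-u(y)-L_1\,\omega\bigl(d(x,y)\bigr)-\lambda\bigl(d(x,x_0)^2+d(y,x_0)^2\bigr),\qquad \omega(t):=t^{\alpha},
\]
with $\lambda$ an auxiliary penalty parameter to be selected so that the maximum of $\Psi$ is confined to the interior of $B_{i(x_0)/2}(x_0)^2$. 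Propositions \ref{dproperties1} and \ref{dproperties2} ensure that $\omega(d(x,y))$ off the diagonal within the injectivity radius, as well as $d(\cdot,x_0)^2$, are of class $C^2$ where needed, so that $\Psi$ is an admissible test-function construction. Under the failure assumption one has $\sup\Psi>0$, and the $L^\infty$ bound on $u$ forces the maximizing pair $(\bar x,\bar y)$ to satisfy $\bar x\neq \bar y$ together with the a priori constraint $L_1\,d(\bar x,\bar y)^{\alpha}\le 2\|u\|_{L^\infty(B_{i(x_0)}(x_0))}$.

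The Crandall-Ishii maximum principle then supplies, for every $\eta>0$, symmetric matrices $X,Y$ and vectors $p_{\bar x},p_{\bar y}$ belonging to the appropriate second-order sub/superjets of $u$, subject to a block matrix bound controlled by $D^2\phi(\bar x,\bar y)+\eta (D^2\phi)^2$, where $\phi$ denotes the test portion of $\Psi$. By Lemma \ref{grads} the $\mathfrak X$-gradient coincides with the Riemannian one, so Definition \ref{viscositytestfunctions0} translates the jets into
\[
\bigl\langle X^* p_{\bar x},p_{\bar x}\bigr\rangle_g\ge f(\bar x,u(\bar x),p_{\bar x}),\qquad \bigl\langle Y^* p_{\bar y},p_{\bar y}\bigr\rangle_g\le f(\bar y,u(\bar y),p_{\bar y}).
\]
Subtracting these inequalities and evaluating the matrix bound on $(p_{\bar x},p_{\bar y})$, after letting $\eta\to0$, gives a scalar inequality at $d:=d(\bar x,\bar y)$. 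The crucial input is Proposition \ref{dproperties1}: the eikonal identity $|D_\mathfrak X d|_g=1$ and the vanishing $\langle(D_\mathfrak X^2 d)^* D_\mathfrak X d,D_\mathfrak X d\rangle_g=0$ make the leading $L_1^3$-contribution of the $L_1\omega(d)$-term collapse onto $L_1^3\,\omega'(d)^2\omega''(d)=L_1^3\alpha^3(\alpha-1)d^{3\alpha-4}$, the only sign-definite dominant term in the scheme.

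Combining this with the growth bound $|f|\le C_2|\xi|_g^{\beta}+C_3$, the penalty contribution of order $\lambda L_1^2 d^{2(\alpha-1)}$, and the curvature/commutator error coming from the discrepancy between $(D_\mathfrak X^2 d)^*$ and $\mathrm{Hess}(d)$ (absorbed into the constant $C(\overline{B_{2i(x_0)}(x_0)},g)$), the argument reduces to showing that an inequality of the form
\[
L_1^3\alpha^3(1-\alpha)\,d^{3\alpha-4}\;\le\;C'\bigl(L_1^\beta d^{\beta(\alpha-1)}+1+\lambda L_1^2 d^{2(\alpha-1)}\bigr)
\]
has no solution $d\in\bigl(0,(2\|u\|_{L^\infty}/L_1)^{1/\alpha}\bigr]$ once $L_1$ is taken large enough. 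This is the main obstacle: substituting $d=(2\|u\|_{L^\infty})^{1/\alpha}L_1^{-1/\alpha}s$ with $s\in(0,1]$, the left side scales like $L_1^{4/\alpha}$ while the dominant right side term scales like $L_1^{\beta/\alpha}$, with lower-order contributions at scale $L_1^{2/\alpha}$; the positive gap $(4-\beta)/\alpha>0$ is what drives the contradiction, and the hypothesis $\alpha<(4-\beta)/3$ is precisely what is needed to absorb the penalty and geometric error terms at matching rates within this power count. Choosing $L_1$ at the explicit threshold fixed by $\|u\|_{L^\infty}$, $C_2$, $C_3$, $\beta$, $\alpha$, $i(x_0)$ and the metric constant yields the contradiction and establishes \eqref{lip1}.
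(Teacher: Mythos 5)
Your overall strategy matches the paper's: double variables with a penalization of the form $L\,d^\alpha(x,y)$ plus a quadratic localization term, invoke the Theorem of Sums, and use the eikonal/distance identities of Propositions \ref{dproperties1}--\ref{dproperties2} to make the leading $L^3 d^{3\alpha-4}$ term (from Lemma \ref{dalpha}) carry a definite sign. The minor variations (two localization terms $\lambda d(x,x_0)^2+\lambda d(y,x_0)^2$ versus the paper's one-sided $A\,d^2(x,z)$) are cosmetic.

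However, there is a genuine gap in how you dispose of the second-order remainder from the Theorem of Sums. You propose to bound the matrices by $D^2\phi+\eta(D^2\phi)^2$ and then ``let $\eta\to 0$''. With $\eta=1/\mu$ this would require sending $\mu\to\infty$, but then the lower bound $-(\mu+\|D^2\phi\|)I$ degenerates and the matrices $X_\mu,Y_\mu$ have no a priori compactness; you cannot pass to such a limit. The paper instead keeps $\mu$ finite, chooses it at the explicit scale $\mu\sim L\,\alpha(1-\alpha)d^{\alpha-2}$, and must therefore genuinely estimate the contraction $\frac{1}{\mu}\langle M_L^2\,\zeta,\zeta\rangle_g$ where $\zeta=(\xi_L,\eta_L)$. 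In the Euclidean and Riemannian (connection) settings this contraction is small because $D^2 d\cdot Dd=0$; here, however, the operator involves the \emph{symmetrized} frame Hessian $(D^2_\mathfrak{Z}d)^*$, and $(D^2_\mathfrak{Z}d)^* D_\mathfrak{Z}d\neq 0$ in general, the discrepancy being exactly the commutators $[X_i,X_k]d$. The paper's Lemma \ref{symmetrizedbound} (bounding $\langle((D_\mathfrak{Z}^2 d)^*)^2 D_\mathfrak{Z}d,D_\mathfrak{Z}d\rangle_g$ by a commutator constant $C(K,\mathfrak{X})$) and its corollary Lemma \ref{symmetrizedboundalpha} are precisely what make this step go through, and nothing in your sketch supplies that estimate; your vague appeal to a ``curvature/commutator error ... absorbed into the constant'' does not identify that this error enters \emph{through the $(D^2\phi)^2/\mu$ term}, nor that it is controlled by the eikonal identity applied to the non-symmetrized Hessian. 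Without it, the $(D^2\phi)^2$ contribution cannot be dominated and the inequality you write down does not follow.

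The final contradiction is also somewhat loose. You substitute $d\sim L_1^{-1/\alpha}s$ and compare powers of $L_1$; but the a priori bound only gives $L_1 d^\alpha\le C$, i.e.\ $s\in(0,1]$, and the $s$-dependence of the competing terms is not uniform as $s\to 0$. The paper's cleaner route is to observe that the combined inequality forces $L^3 d^{3\alpha-4+\beta}\lesssim (L d^\alpha)^\beta\lesssim 1$, while $L\to\infty$ and $d\to 0$ together with $3\alpha-4+\beta<0$ (which is the hypothesis $\alpha<\tfrac{4-\beta}{3}$) force $L^3 d^{3\alpha-4+\beta}\to\infty$; this avoids any uniformity-in-$s$ issue. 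I would recommend replacing your scaling heuristic with this direct argument, and, more importantly, supplying the commutator bound before contracting the $(D^2\phi)^2$ term.
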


In the Euclidean case, where $X_i=\partial_{x_i}$,  Theorem \ref{apriori0} was proven by Lindgren in \cite{L14}. 
In the Riemannian  case,  Theorem \ref{apriori0} was proven by Lu, Miao, and Zhu  in \cite{LMZ19}.
They consider the equation
\begin{equation}\label{lmz1}
\langle D\langle A(x)Du(x), Du(x)\rangle , A(x) Du(x) \rangle= f(x),
\end{equation}
where $A\in C^{1}$ and $f$ is continuous. 
Their proof is based on  using the Hamilton-Jacobi equation  $\langle A(x), p\rangle+\lambda u =1$ to approximate the intrinsic metric associated to $A(x)$.  It turns out  that equations \eqref{lmz1} and \eqref{homogeneous} are the same equation since we have
$$\left\langle \left(D_\mathfrak{X}^2 u\right)^* D_\mathfrak{X}u, D_\mathfrak{X}u \right\rangle_{g}
= \left\langle D\left\langle A(x)Du(x), Du(x)\right\rangle , A(x) Du(x) \right\rangle$$ 
when we take $A(x)=  \mathbb{A}^t(x) \mathbb{A}(x)$. Note that $A(x)=\mathbb{G}(x)^{-1}$, where $\mathbb{G}(x)$ is the matrix of the metric $g_x$, see \eqref{metric}. \par
Our proof of Theorem \ref{apriori0} follows by using directly  properties of the Riemannian metric  that we discuss in Section \S \ref{three} below. In particular we establish the analog of the Euclidean formula $\Delta_{\infty} |x|^{\alpha}= 4 \alpha^3 (\alpha-1)|x|^{3\alpha-4}$ for a general Riemannian metric
$$ \langle (D_\mathfrak{X}^2 d^{\alpha} )^* \cdot D_\mathfrak{X}d^{\alpha}, D_\mathfrak{X}d^{\alpha}\rangle_g= 4 \alpha^3 (\alpha-1) d^{3\alpha -4}$$ whenever $x\mapsto d(x,y)$ is smooth, see Lemma \ref{dalpha} below.
Another important result in \cite{LMZ19} is the everywhere differentiability of the solutions when $f\in C^1$. In the Euclidean case Lindgren \cite{L14} extended the result of Evans and Smart \cite{ES11b} to the non-homogeneous case by establishing an almost-monotonicity property of incremental quotients to obtain the linear approximation property and the everywhere differentiability. 
In the Riemanniann case Lu, Miao and Zhou again use Hamilton-Jacobi equations to establish their result. 

\par

Our proof of Theorem \ref{apriori1}  is an adaptation of the standard penalization argument  with several challenges posed by the non-commutativity of the vector fields in the frame. This is the Crandall-Ishii-Lions method for  regularity of viscosity solutions (see for example  \cite{IL90, Cr97}). The authors found particularly useful the reading of \cite{IS1} and \cite{IS13} as well.   About such approach, there are many contributions in literature. Among them, we wish to recall the following works \cite{BGI}, \cite{BGL}, \cite{FV}, \cite{FG21}, where the regularity of viscosity solutions of truncated operators has been studied. Moreover, always in the frame of a degenerate situation, but in a non-commutative structures, we point out the results contained in \cite{F}, \cite{FVe} and \cite{G}.   
 We develop several properties of the second derivatives of the metric in Section \S \ref{three}  to double the variables and use an adapted theorem of sums.  A key estimate is a bound for the symmetrized second derivatives of the distance, Lemma \ref{symmetrizedbound} below, that we obtain from the eikonal equation. Note that we allow for a general first order term 
$f(x,u, D_\mathfrak{X}u)$ but that we only get H\"older estimates. 
\par
In addition to the blow-up and duality estimates in the homogeneous case in \cite{ES11} and \cite{ES11b}, we would like to mention  \cite{LW08},  where the inhomogeneous $\infty$-Laplacian was treated from the PDE point of view, \cite{AS12} for a finite differences treatment,  and \cite{PSSW09} for a tug-of-war interpretation. 
Sharp  estimates for the Sobolev derivative of $|\nabla u|^\alpha$ for  solutions of \eqref{homogeneous} in the Euclidean plane $\R^2$ are  obtained in \cite{KZZ19} when $f$ is continuous, non vanishing,  and of bounded variation.

\par\normalcolor
A representative example is the Riemannian Heisenberg group,  where the frame $\mathfrak{X}=\{X, Y, Z\}$ is given by  
 the left invariant vector fields in $\mathbb{R}^3$  with respect to the Heisenberg group operation
 $(x,y,z)*(x',y',z')=(x+x', y+y', z+z'+\frac12 (xy'-y'x))$. These vector fields are  $X= \partial_x-\frac{1}{2} y \, \partial_z$,  $Y =\partial_y+\frac{1}{2} x \, \partial_z$, and 
 $Z=\partial_z$.  The Levi-Civita connection (computed in Chapter 2 of \cite{CDPT07}) is determined by the equations
 \begin{equation*}\begin{array}{lcccccr}
 \nabla_X X & =& \nabla_Y Y& =& \nabla_Z Z& =0,&  \\
  \nabla_X Y & =& \frac{1}{2}Z, & &   \nabla_Y X & =& -\frac{1}{2} Z , \\
  \nabla_Z X& =& \nabla_X Z&=& -\frac{1}{2} Y,  & \text{and} & \\
  \nabla_Z Y &=& \nabla_Y Z& =& \frac{1}{2} X.  & & 
 \end{array}
 \end{equation*} The matrix of $Hess(u)$ with respect to basis $\{X,Y,Z\}$ is then
 \begin{equation*}
\left( \begin{array}{ccc}
XXu & XYu-\frac12 Zu & XZu +\frac12 Yu\\
YXu +\frac12 Zu & YYu & YZu-\frac12 Xu\\
ZXu+\frac12 Yu&  ZYu-\frac12 Xu & ZZu
 \end{array}\right),
 \end{equation*}
 which differs from $\left(D_\mathfrak{X}^2 u\right)^*$ in the $(1,3), (2,3), (3,1)$ and $(3,2)$ entries.\par
 Nevertheless, in this particular case we still have that the Riemannian $\infty$-Laplacian
 \begin{equation}\label{riemannianinfinity}
 \Delta_{g,\infty} u = \langle Hess(u) \mathfrak{X}u, \mathfrak{X} u\rangle_g \end{equation}
 agrees with the
 frame $\infty$-Laplacian
 $$\Delta_{\mathfrak{X}, \infty}u=  \langle \left(D_\mathfrak{X}^2 u\right)^* \mathfrak{X}u, \mathfrak{X}u\rangle_g,
 $$
 as a direct calculation shows. Therefore, Theorems \ref{apriori0} and \ref{apriori1}  also hold for \eqref{riemannianinfinity} in the Riemannian Heisenberg case.
 \par
  
  The plan of the paper is as follows: in Section \S 2 we present the details of our set-up. In section \S 3  we present the proof of bound for the symmetrized second derivatives of the distance. Some facts about  viscosity solutions and frames are in Section \S4. The proof of the main results Theorems \ref{apriori0}  and \ref{apriori1} are in Sections \S 5 and \S 6 respectively.\par
  
 \textsc{Acknowledgement:} We thank the anonymous referee for bringing to our attention the reference \cite{LMZ19} and 
 for several suggestions that have improved the readability of the paper.

\section{Preliminaires}
In $\mathbb{R}^n$ the function  $u(x)=|x-x_0|$  satisfies both the eikonal equation $|\nabla u|=1$ and the $\infty$-Laplace equation $\Delta_{\infty}(u)= \langle D^2u\, \nabla u, \nabla u \rangle=0$ in $\mathbb{R}^n\setminus\{x_0\}$. 
A similar phenomena occurs for the case of Riemannian and sub-Riemannian manifolds, where  the function $u(x)=d(x,x_0)$  satisfies the eikonal equation and the infinity-Laplace equation whenever it is smooth, 
see Proposition \ref{dproperties1} below.

We consider the case where the manifold is $\mathbb{R}^n$ endowed with a Riemannian metric induced by a frame
 $\mathfrak{X}=\{X_1, X_2, \dots, X_n\}$; that is, $\mathfrak{X}$ is a collection of $n$ linearly independent vector fields in $\mathbb{R}^n$.  \par
We first write down an appropriate Taylor theorem adapted to the frame $\mathfrak{X}$. For this, we will use exponential
coordinates as done  in \cite{NSW85}.  Fix a point $p\in \mathbb{R}^n$ and let
$t=(t_1, t_2,\ldots, t_n)$ denote a vector close to zero. We define the flow exponential based
at $p$ of $t$, denoted by $\Theta_p(t)$, as follows.
Let $\gamma$ be the unique solution to the system  of ordinary differential equations
$$\gamma'(s)=\sum_{i=1}^n t_i X_i(\gamma(s))$$ satisfying the initial condition
$\gamma(0)=p$. We set  $\Theta_p(t)=\gamma(1)$ and note this is defined in 
a neighborhood of zero. \par
Applying the one-dimensional  Taylor's formula to 
$u(\gamma(s))$ we get 
\begin{lemma}\label{taylor}(\cite{NSW85}) 
Let $u$ be a smooth function in a neighborhood of $p$. We have:
$$u\left(\Theta_p(t)\right)=
u(p)+\langle D_{\mathfrak{X}}u(p)  ,t  \rangle+
\frac{1}{2}\langle   \left(D_\mathfrak{X}^2 u(p) \right)^*  t , t  \rangle
+o(|t|^2)$$
as $t\to 0$.\par
\end{lemma}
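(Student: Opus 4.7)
The plan is to reduce the multi-variable expansion to a one-dimensional Taylor expansion along the integral curve $\gamma$, exactly as indicated in the text immediately preceding the statement. Set $\phi(s) := u(\gamma(s))$ where $\gamma$ is the solution of $\gamma'(s) = \sum_{i} t_i X_i(\gamma(s))$ with $\gamma(0)=p$, so that $\phi(1) = u(\Theta_p(t))$. Since $u$ is smooth and the $X_i$ are smooth, $\phi$ is $C^2$ on a neighborhood of $[0,1]$ (for $t$ small), and by the chain rule together with the ODE for $\gamma$,
\begin{equation*}
\phi'(s) = \sum_{i=1}^n t_i (X_i u)(\gamma(s)), \qquad \phi''(s) = \sum_{i,j=1}^n t_i t_j (X_j X_i u)(\gamma(s)).
\end{equation*}
Evaluating at $s=0$, one gets $\phi'(0) = \langle D_{\mathfrak{X}} u(p), t\rangle$ and $\phi''(0) = \sum_{i,j} t_i t_j X_j X_i u(p)$.

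Next I would exploit the symmetry of $t_i t_j$ in $(i,j)$: since the coefficient is symmetric in $i,j$, one can replace the (possibly non-symmetric) matrix $(X_j X_i u(p))_{i,j}$ by its symmetrization, so
\begin{equation*}
\phi''(0) = \sum_{i,j} t_i t_j \,\frac{X_j X_i u(p) + X_i X_j u(p)}{2} = \bigl\langle (D_{\mathfrak{X}}^2 u(p))^*\, t, t\bigr\rangle.
\end{equation*}

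Now I would apply the one-dimensional Taylor formula with integral remainder to $\phi$ at $s=1$, expanded around $s=0$:
\begin{equation*}
\phi(1) = \phi(0) + \phi'(0) + \tfrac{1}{2}\phi''(0) + \int_0^1 (1-s)\bigl[\phi''(s) - \phi''(0)\bigr]\,ds.
\end{equation*}
The first three terms reproduce the desired expression, so it remains to show that the remainder is $o(|t|^2)$. Bounding crudely,
\begin{equation*}
\left| \int_0^1 (1-s)[\phi''(s)-\phi''(0)]\,ds\right| \le |t|^2 \sum_{i,j}\sup_{s\in[0,1]} \bigl|X_j X_i u(\gamma(s)) - X_j X_i u(p)\bigr|.
\end{equation*}
By continuous dependence of ODE solutions on parameters, $\gamma(s)\to p$ uniformly on $s\in[0,1]$ as $t\to 0$; since the second-order derivatives $X_j X_i u$ are continuous, the supremum tends to $0$. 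This yields the required $o(|t|^2)$ bound.

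The only real subtlety is checking that the integral curve is defined up to $s=1$ and remains in a fixed compact neighborhood of $p$ for $|t|$ small, and that the chain-rule computation of $\phi''$ uses that $X_j X_i u$ means first applying $X_i$ and then $X_j$ (the ordering matters because the $X_i$'s need not commute, but this ordering is automatically absorbed by the symmetrization step). Once these points are in place, the rest is a direct application of the classical one-variable Taylor theorem, so I do not anticipate a genuine obstacle.
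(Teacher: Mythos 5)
Your proof is correct and follows exactly the route indicated in the paper, which simply applies the one-dimensional Taylor formula to $s\mapsto u(\gamma(s))$; your chain-rule computation of $\phi''$, the symmetrization of $(X_jX_iu)$ permitted by the symmetry of $t_it_j$, and the $o(|t|^2)$ remainder via uniform convergence $\gamma(s)\to p$ are all accurate. No issues.
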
 
If instead of the flow exponential based at $p$ we use the Riemannian exponential $\text{Exp}_p(t)$ we have
\begin{lemma}\label{riemmanniantaylor}
Let $u$ be a smooth function in a neighborhood of $p$. We have:
$$u\left(\text{Exp}_p(t)\right)=
u(p)+\langle D_{\mathfrak{X}}u(p)  ,t  \rangle+
\frac{1}{2}\langle   \text{Hess}( u)(p)  t , t  \rangle
+o(|t|^2)$$
as $t\to 0$.\par
\end{lemma}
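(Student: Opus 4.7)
The approach parallels Lemma \ref{taylor} but replaces integral curves of the frame with geodesics, so that the Riemannian Hessian appears in place of the symmetrized frame second derivative. I would identify $T_p\mathbb{R}^n$ with $\mathbb{R}^n$ via the $g$-orthonormal basis $\mathfrak{X}(p)=\{X_1(p),\ldots,X_n(p)\}$: to $t=(t_1,\ldots,t_n)\in\mathbb{R}^n$ of small norm associate the tangent vector $v(t)=\sum_{i=1}^n t_i X_i(p)$, which satisfies $|v(t)|_g=|t|$. Setting $F(t)=u\!\left(\text{Exp}_p(v(t))\right)$, the smoothness of $\text{Exp}_p$ near $0\in T_p\mathbb{R}^n$ and of $u$ near $p$ make $F$ smooth near $t=0$, so the ordinary Taylor expansion in $\mathbb{R}^n$ yields
$$F(t)=F(0)+\nabla F(0)\cdot t+\tfrac12\langle D^2F(0)\,t,t\rangle+o(|t|^2).$$
The job is to identify the two coefficients.

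For the first derivative, consider the geodesic $\gamma(s)=\text{Exp}_p(sv(t))$, which satisfies $\gamma(0)=p$ and $\gamma'(0)=v(t)$. Differentiating $u\circ\gamma$ at $s=0$ gives
$$\partial_s u(\gamma(s))\big|_{s=0}=du_p(v(t))=\sum_{i=1}^n t_i\,X_iu(p)=\langle D_{\mathfrak{X}}u(p),t\rangle,$$
so $\nabla F(0)$, expressed in the basis $\mathfrak{X}(p)$, is the row of the $X_iu(p)$. For the second derivative, I would extend $\gamma'(0)$ to a smooth vector field $V$ on a neighborhood of $p$ that coincides with $\gamma'$ along $\gamma$ (possible by smoothness of $\text{Exp}_p$), so that $\partial_s^2 u(\gamma(s))\big|_{s=0}=V(Vu)(p)$. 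Using the defining identity $\text{Hess}(u)(V,V)=V(Vu)-(\nabla_V V)u$ together with the geodesic equation $\nabla_{\gamma'}\gamma'\equiv 0$, which forces $(\nabla_V V)(p)=0$, this becomes
$$\partial_s^2 u(\gamma(s))\big|_{s=0}=\text{Hess}(u)(v(t),v(t)).$$
Since $\{X_i(p)\}$ is $g$-orthonormal, the matrix of $\text{Hess}(u)(p)$ in this basis has entries $H_{ij}=\text{Hess}(u)(X_i(p),X_j(p))$ and
$$\text{Hess}(u)(v(t),v(t))=\sum_{i,j} H_{ij}\,t_it_j=\langle \text{Hess}(u)(p)\,t,t\rangle,$$
identifying $D^2F(0)$ with the Hessian matrix. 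Plugging both identifications into the Euclidean Taylor expansion gives the claim.

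The only conceptually non-routine point is the vanishing of $\nabla_{\gamma'}\gamma'$, which is exactly what distinguishes the Riemannian exponential from the flow exponential $\Theta_p$ of Lemma \ref{taylor}: here the geodesic equation kills the correction term $(\nabla_V V)u$ and leaves the Riemannian Hessian, whereas for $\Theta_p$ no such cancellation occurs and the symmetrized frame matrix $(D_{\mathfrak{X}}^2u)^*$ is what survives. Everything else is a routine application of one-dimensional Taylor in $s$ followed by polarization in $t$.
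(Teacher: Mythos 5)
Your argument is correct and complete. The paper itself does not give a proof of this lemma but simply refers to Chapter 8 of \cite{GQ20}, so there is no internal proof to compare against; the route you take --- reducing to a one-parameter Taylor expansion of $s\mapsto u(\mathrm{Exp}_p(sv(t)))$, writing the second $s$-derivative as $V(Vu)(p)$ for any smooth extension $V$ of $\gamma'$, and using the geodesic equation $\nabla_{\gamma'}\gamma'=0$ together with the defining identity $\mathrm{Hess}(u)(V,V)=V(Vu)-(\nabla_V V)u$ (the paper's convention, stated in Example \ref{hessian}) to remove the correction term --- is the standard and complete way to establish the statement, and the identification of $v(t)=\sum_i t_i X_i(p)$ with $t$ via the $g$-orthonormal basis $\mathfrak{X}(p)$ correctly matches the inner products on both sides.
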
 
For the proof, see for example Chapter 8 in \cite{GQ20}. 
Applying Lemma \ref{taylor} to the coordinate functions we obtain:
\begin{lemma}\label{lemma3} Write  $\Theta_p(t)=\left( \Theta^1_p(t),\Theta^2_p(t),\dots,\Theta^n_p(t)     \right)$. Note that we can think of $X_i(x)$ as the $i$-th row of $\mathbb{A}(x)$.
Similarly  $D\Theta^k_p(0)$ is the $k$-column of $\mathbb{A}(p)$ so that 
$$D\Theta_p(0)=\mathbb{A}(p).$$ 
 \end{lemma}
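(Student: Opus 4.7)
The plan is to apply the frame Taylor formula of Lemma \ref{taylor} to the coordinate functions $u_k(x) = x_k$, $k=1,\dots,n$, and then read off the linear term.

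First, since $\Theta_p^k(t) = u_k(\Theta_p(t))$, Lemma \ref{taylor} immediately gives
\[
\Theta_p^k(t) = p_k + \langle D_\mathfrak{X} u_k(p), t\rangle + \tfrac12\langle (D_\mathfrak{X}^2 u_k(p))^* t, t\rangle + o(|t|^2).
\]
Next I would compute the $i$-th component of $D_\mathfrak{X} u_k(p)$, which is
\[
X_i(u_k)(p) = \sum_{j=1}^n a_{ij}(p)\,\partial_{x_j} x_k = a_{ik}(p).
\]
Differentiating the Taylor expansion at $t=0$ in the direction $t_i$ then yields
\[
\frac{\partial \Theta_p^k}{\partial t_i}(0) = a_{ik}(p),
\]
which is exactly the $i$-th entry of the $k$-th column of $\mathbb{A}(p)$. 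Rearranging these entries column by column gives the claimed identification $D\Theta_p(0)=\mathbb{A}(p)$, with the convention that the $k$-th column is the gradient $D\Theta_p^k(0)$ and the $i$-th row is $X_i(p)$.

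There is essentially no obstacle here beyond being careful with the row/column convention: the content of the lemma is purely the observation that, when the smooth function in Lemma \ref{taylor} is taken to be a coordinate function, the linear coefficient of the Taylor expansion is the corresponding row of $\mathbb{A}(p)$. One could alternatively derive the same formula directly from the defining ODE $\gamma'(s) = \sum_i t_i X_i(\gamma(s))$ with $\gamma(0) = p$: setting all $t_j=0$ except $t_i$, the curve $s \mapsto \Theta_p(s e_i)$ is an integral curve of $X_i$ starting at $p$, so $\partial_{t_i}\Theta_p(0) = X_i(p) = (a_{i1}(p),\dots,a_{in}(p))$, giving the same entries $a_{ik}(p)$. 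This direct computation is a useful sanity check but is not needed once Lemma \ref{taylor} is invoked.
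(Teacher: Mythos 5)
Your proof is correct and follows exactly the paper's route: the paper simply states ``Applying Lemma \ref{taylor} to the coordinate functions we obtain:'' before the lemma, and you carry that out explicitly by taking $u_k(x)=x_k$, computing $X_i(u_k)(p)=a_{ik}(p)$, and reading off the linear coefficient. The supplementary ODE check ($s\mapsto\Theta_p(se_i)$ is an integral curve of $X_i$, hence $\partial_{t_i}\Theta_p(0)=X_i(p)$) is a nice independent confirmation and matches the row/column convention the paper uses, but it is the same underlying fact rather than a different method.
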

In particular, the mapping $t\mapsto \Theta_p(t)$ is a diffeomorphism taking  
a neighborhood of $0$ into a neighborhood of $p$.\par
For vector fields  $Y=\sum_{i=1}^n y_i X_i$ and $Z=\sum_{i=1}^n z_i X_i$ we  have
$$ \langle Y, Z\rangle_g= \sum_{i=1}^n y_i z_i.$$ Writing $X$ and $Y$ in Euclidean coordinates
$Y=\sum_{i=1}^n \bar{y}_i \partial_{x_i}$ and $Z=\sum_{i=1}^n \bar{z}_i \partial_{x_j}$  we get
$$\langle Y, Z\rangle_g=\sum_{i,j=1}^n  \bar{y}_i\bar{z}_i\langle \partial_{x_i}, \partial_{x_i}\rangle_g=\sum_{i,j=1}^n  \bar{y}_i\bar{z}_i \mathbb{G}_{ij}
= \sum_{i,j=1}^n  \bar{y}_i\bar{z}_j  \left(\mathbb{A}^t \mathbb{A}\right)^{-1}_{ij}. $$ Conclude that
$$\langle Y, Z\rangle_g= \langle  \left(\mathbb{A}^t \mathbb{A}\right)^{-1} Y, Z\rangle= \langle
(\mathbb{A}^{-1})^t Y,(\mathbb{A}^{-1})^t  Z\rangle $$ and $$\langle \mathbb{A}^ t Y, \mathbb{A}^t Z\rangle_g= \langle Y, Z \rangle.$$
\begin{lemma}\label{grads}
Let $u\colon\mathbb{R}^n\mapsto\mathbb{R}$ be a smooth function. Then,  the Riemannian gradient of $u$ relative to the metric $g$ is the vector field $D_\mathfrak{X}u=\sum_{j=1}^n X_j(u) X_j
$ with length
$$| D_\mathfrak{X}u|_g= \langle D_\mathfrak{X}u, D_\mathfrak{X}u\rangle^{1/2}_g= \left(\sum_{i=1}^n (X_i u)^2\right)^{1/2}.$$

\end{lemma}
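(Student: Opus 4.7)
The plan is to verify the lemma directly from the definition of the Riemannian gradient, exploiting the fact that by construction the frame $\mathfrak{X}$ is $g$-orthonormal.

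First I would recall that the Riemannian gradient $\nabla_g u$ is the unique smooth vector field characterized by the identity $g_x(\nabla_g u(x), v) = du_x(v) = v(u)$ for every tangent vector $v$ at $x$. Since $\{X_1(x),\dots,X_n(x)\}$ is a $g_x$-orthonormal basis of the tangent space at $x$, I can expand $\nabla_g u(x) = \sum_{j=1}^n c_j(x) X_j(x)$ and compute the coefficients by taking inner products with each $X_k(x)$:
\begin{equation*}
c_k(x) = g_x(\nabla_g u(x), X_k(x)) = X_k(u)(x).
\end{equation*}
This shows $\nabla_g u = \sum_{j=1}^n X_j(u)\, X_j = D_\mathfrak{X} u$, which is the first assertion.

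For the length, I would simply use bilinearity of $g_x$ and the orthonormality condition $g_x(X_i(x), X_j(x)) = \delta_{ij}$:
\begin{equation*}
|D_\mathfrak{X} u|_g^2 = g\!\left(\sum_{i=1}^n X_i(u) X_i,\ \sum_{j=1}^n X_j(u) X_j\right) = \sum_{i,j=1}^n X_i(u)\, X_j(u)\, \delta_{ij} = \sum_{i=1}^n (X_i u)^2,
\end{equation*}
and taking square roots gives the stated formula. There is no real obstacle here; the lemma is essentially a restatement of the defining property of the gradient combined with the orthonormality of the frame $\mathfrak{X}$, which was imposed precisely to make this identification hold.
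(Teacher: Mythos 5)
Your proof is correct but takes a genuinely different, more intrinsic route than the paper's. The paper starts from the coordinate expression for the Riemannian gradient, $\nabla_g u = \sum_{i,j} \mathbb{G}^{ij}\,\partial_i u\,\partial_j$, plugs in the identity $\mathbb{G}^{-1} = \mathbb{A}^t\mathbb{A}$ from \eqref{metric}, and then rearranges the double sum into $\sum_j X_j(u)\,X_j$; it is a matrix calculation that stays within the Euclidean coordinates already set up. You instead invoke the defining property $g_x(\nabla_g u, v) = v(u)$ and read off the coefficients directly from the $g$-orthonormality of the frame, then get the length formula by bilinearity and $g(X_i,X_j)=\delta_{ij}$. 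Both are valid; your version is coordinate-free and makes transparent that the result is forced by the very construction of $g$ (namely, that $\mathfrak{X}$ was declared orthonormal), while the paper's version ties the computation explicitly to the matrices $\mathbb{A}$ and $\mathbb{G}$ that recur throughout the rest of the paper. You also supply the short argument for the length claim, which the paper's proof leaves implicit.
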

\begin{proof}
The Riemannian gradient is the vector field is give by the expression
$$\sum_{i,j=1}^n \mathbb{G}^{ij}\frac{\partial f}{\partial x_i}\frac{\partial}{\partial x_i}= \sum_{i,j=1}^n (\mathbb{A}^t \mathbb{A})_{ij} \frac{\partial f}{\partial x_i}\frac{\partial}{\partial x_i}= \sum_{j=1}^n X_j(f) X_j. $$ 
\end{proof}\
\begin{example}\label{hessian}
Note that it is not true, in general, that the Riemannian Hessian of a function $u$ given by $Hess(u)(V,W)= VWu-\nabla_V W u$, where $V$ and $W$ are arbitrary vector fields,  equals the symmetrized  second derivatives relative to the frame  $\left(D_\mathfrak{X}^2 u\right)^*$. Here  $\nabla$ denotes the Levi-Civita connection.  
Consider the  Riemannian Heisenberg group $\mathbb{H}$ with 
 left invariant vector fields in $\mathbb{R}^3$ given by $X= \partial_x-\frac{1}{2} y \, \partial_z$,  $Y =\partial_y+\frac{1}{2} x \, \partial_z$, and 
 $Z=\partial_z$. 
 The Levi-Civita connection (computed in Chapter 2 of \cite{CDPT07}) is determined by the equations
 \begin{equation*}\begin{array}{lcccccr}
 \nabla_X X & =& \nabla_Y Y& =& \nabla_Z Z& =0,&  \\
  \nabla_X Y & =& \frac{1}{2}Z, & &   \nabla_Y X & =& -\frac{1}{2} Z , \\
  \nabla_Z X& =& \nabla_X Z&=& -\frac{1}{2} Y,  & \text{and} & \\
  \nabla_Z Y &=& \nabla_Y Z& =& \frac{1}{2} X.  & & 
 \end{array}
 \end{equation*} The matrix of $Hess(u)$ with respect to basis $\{X,Y,Z\}$ is then
 \begin{equation*}
\left( \begin{array}{ccc}
XXu & XYu-\frac12 Zu & XZu +\frac12 Yu\\
YXu +\frac12 Zu & YYu & YZu-\frac12 Xu\\
ZXu+\frac12 Yu&  ZYu-\frac12 Xu & ZZu
 \end{array}\right),
 \end{equation*}
 which differs from $\left(D_\mathfrak{X}^2 u\right)^*$ in the $(1,3), (2,3), (3,1)$ and $(3,2)$ entries.\par

 \end{example}
 \begin{remark}
\par
The mapping $$t\mapsto \Theta_p(t)$$ is the flow exponential that agrees with the Lie group exponential when the frame $\mathfrak{X}$ happens to be a basis for a Lie algebra of an $n$-dimensional Lie group. \par
Associated to the Riemannian metric $g$ we also have the Riemannian exponential $t\mapsto Exp_p(t)$ defined using geodesics. Both are diffeomorphisms in a neighborhood of $0$. Lemma \ref{grads} shows that they agree up to first order since the Riemannian gradient equals the frame gradient (the linear terms in the Taylor development are the same). \par
Note that for the Riemannian Heisenberg group the flow exponential mapping is  the group multiplication
$$\Theta_p(t)=p\cdot \Theta_0(t)=(x+t_1,y+t_2,z+t_3+(1/2)(xt_2-yt_1)).$$
Taking into account the explicit formula for the Riemannian  exponential $Exp_p(t)$ in the Riemannian Heisenberg group (see \cite{BN16}) we conclude that $ \Theta_p(t)$ and $Exp_p(t)$ are different mappings.\par
On the other hand, the flow exponential agrees with the Riemannian exponential in the case of Lie groups equipped with a bi-invariant metric,  see Chapter 21 in \cite{GQ20} or Chapter 2 in \cite{AB15}. Compact Lie groups, like $SO(n)$, admit a bi-invariant metric. In fact a connected  Lie group admits a bi-invariant metric if and only if it is isomorphic to the product of a compact group and an abelian group (Lemma 7.5 in \cite{M76}.)
\end{remark} 

\begin{proposition}\label{dproperties1}
Fix $x_0\in \mathbb{R}^n$ and consider the function
$u(x)=d(x_0,x)$. This function is smooth in the set $B_{i(x_0)}(x_0)\setminus\{x_0\}$, it satisfies the eikonal equation
\begin{equation}\label{eikonal}
| D_\mathfrak{X} u |_{g}=1, 
\end{equation}
and it is $\infty$-harmonic
\begin{equation}\label{inftyharmonic}
 \Delta_{\mathfrak{X},\infty} u =\left\langle \left(D_\mathfrak{X}^2 u\right)^* D_\mathfrak{X}u, D_\mathfrak{X}u \right\rangle_{g}=0
\end{equation}
\end{proposition}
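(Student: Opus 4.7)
The proposition makes three assertions: (a) smoothness of $u(x)=d(x_0,x)$ on $B_{i(x_0)}(x_0)\setminus\{x_0\}$, (b) the eikonal equation $|D_\mathfrak{X} u|_g=1$, and (c) $\Delta_{\mathfrak{X},\infty} u=0$. My plan is to handle (a) and (b) by invoking two classical facts from Riemannian geometry, and then to deduce (c) from (b) by a short algebraic manipulation.

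For (a) I would invoke the standard fact that within the injectivity ball the Riemannian exponential map $\text{Exp}_{x_0}\colon B_{i(x_0)}(0)\subset T_{x_0}\mathbb{R}^n\to B_{i(x_0)}(x_0)$ is a smooth diffeomorphism, and $d(x_0,\text{Exp}_{x_0}(v))=|v|_{g_{x_0}}$. Hence $u$ equals the composition of the smooth inverse exponential with the norm on the tangent space, which is smooth away from the origin. For (b), let $\gamma(s)=\text{Exp}_{x_0}(s\theta)$ be a unit-speed radial geodesic emanating from $x_0$. Then $u(\gamma(s))=s$, so the Riemannian gradient of $u$ satisfies $\langle \mathrm{grad}_g u,\gamma'(s)\rangle_g=1$ along $\gamma$. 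By Gauss' lemma the level sets of $u$ are orthogonal to the radial geodesics, so $\mathrm{grad}_g u$ is parallel to $\gamma'$ and of unit $g$-length. Lemma \ref{grads} identifies $D_\mathfrak{X} u$ with $\mathrm{grad}_g u$, and (b) follows.

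For (c), I would differentiate the eikonal identity $\sum_{i=1}^n (X_i u)^2=1$ along each vector field $X_k$ to obtain
\begin{equation*}
\sum_{j=1}^n (X_k X_j u)(X_j u)=0, \qquad k=1,\dots,n.
\end{equation*}
Symmetrizing the summation indices $(i,j)$ in the defining expression for the frame $\infty$-Laplacian gives
\begin{equation*}
\Delta_{\mathfrak{X},\infty} u=\sum_{i,j=1}^n \frac{X_i X_j u+X_j X_i u}{2}(X_i u)(X_j u)=\sum_{i,j=1}^n (X_i X_j u)(X_i u)(X_j u),
\end{equation*}
and regrouping yields $\sum_{i=1}^n (X_i u)\bigl[\sum_{j=1}^n (X_i X_j u)(X_j u)\bigr]=0$ by the previous identity applied with $k=i$.

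The only non-elementary input is (a), namely the classical smoothness of the distance function inside the injectivity radius; (b) then follows from Gauss' lemma via Lemma \ref{grads}, and (c) is a two-line consequence of (b) that uses no further structure of the frame. In particular the possible lack of symmetry of $D_\mathfrak{X}^2 u$ plays no role, since only its symmetric part enters the quadratic form $\Delta_{\mathfrak{X},\infty} u$; this is in contrast with the comparison between $(D_\mathfrak{X}^2 u)^*$ and $\mathrm{Hess}(u)$ highlighted in Example \ref{hessian}.
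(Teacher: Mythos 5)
Your proposal is correct, but it is considerably more detailed than the paper's own proof, which simply cites references: Chapter~6 of Lee's book for the smoothness of $d$ inside the injectivity ball, and Corollary~4.12 of Dragoni--Manfredi--Vittone for the eikonal and $\infty$-harmonic identities. So the paper treats all three facts as known and supplies no argument.

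Your self-contained derivation of (c) from (b) is exactly the computation the paper later carries out in Section~3 for the two-variable distance function: differentiating the eikonal identity $\sum_i(X_i u)^2=1$ along each $X_k$ yields Lemma~\ref{goodeikonal}, whose matrix form \eqref{matrixeikonal} gives $D_\mathfrak{X}^2 d\cdot D_\mathfrak{X}d=0$ and hence \eqref{eikonalbest2}. You perform the same symmetrization-and-regrouping step, so while your proof is ``different'' from the citation the paper offers here, it is in fact the argument the paper itself uses three pages later to establish the same identity in the doubled-variable setting. The observation at the end of your sketch — that the antisymmetric part of $D_\mathfrak{X}^2u$ drops out of the quadratic form — is the reason Lemma~\ref{goodeikonal} suffices and is indeed what makes the computation frame-agnostic. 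Your Gauss-lemma argument for (b) and the exponential-chart argument for (a) are standard and fill in what the paper outsources to \cite{L18}. In short: correct, more self-contained than the original, and consistent with the machinery developed in Section~3.
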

\begin{proof} Recall that $d(x,x_0)$ is smooth in $B_{i(x_0)}(x_0)\setminus\{x_0\}$ (see Chapter 6 in \cite{L18} for example). 
The fact that $d(x_0,x)$ satisfies \eqref{eikonal} and \eqref{inftyharmonic}  is also well-known (see Corollary 4.12 in \cite{DMV13}).
\end{proof}

\begin{proposition}\label{dproperties2} Fix $x_0\in \mathbb{R}^n$. The function  $(x,y)\mapsto d^2(x,y)$  is smooth in  $B_{i(x_0)}(x_0)\times B_{i(x_0)}(x_0)$.
\end{proposition}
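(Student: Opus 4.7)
The plan is to express $d^2(x,y)$ via the Riemannian inverse exponential and deduce its smoothness as a composition of smooth maps. If $y$ lies within the injectivity radius of $x$, there is a unique $v\in T_x\mathbb{R}^n$ with $y=\mathrm{Exp}_x(v)$, and then $d^2(x,y)=\langle v,v\rangle_{g_x}$. Since $g$ depends smoothly on the basepoint, the proposition reduces to showing that $(x,y)\mapsto v(x,y):=\mathrm{Exp}_x^{-1}(y)$ is smooth on the claimed bidisk (after shrinking the ball by a fixed factor, if necessary).

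First I would analyze the ``full'' exponential map $E:TU\to U\times\mathbb{R}^n$, $E(x,v)=(x,\mathrm{Exp}_x(v))$, on the open set $\{(x,v):|v|_{g_x}<i(x)\}$. This $E$ is smooth because the geodesic flow depends smoothly on initial conditions. At a point $(x,0)$, the differential $dE_{(x,0)}$ has block upper-triangular form with identity diagonal blocks (using that $d(\mathrm{Exp}_x)_0=\mathrm{id}$), hence is invertible. Coupled with injectivity of $\mathrm{Exp}_x$ inside the injectivity-radius ball, $E$ is a diffeomorphism onto its image, so its inverse $E^{-1}(x,y)=(x,\mathrm{Exp}_x^{-1}(y))$ is smooth.

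Next I would handle the geometric containment. The injectivity radius $i(\cdot)$ is lower semicontinuous and in fact $1$-Lipschitz, so $i(x)\ge i(x_0)-d(x,x_0)$. Hence, if one restricts to $x,y$ both in a ball around $x_0$ of radius at most $i(x_0)/3$, the triangle inequality gives
\[
d(x,y)\le d(x,x_0)+d(y,x_0)<\tfrac{2}{3}i(x_0)\le i(x),
\]
so $(x,y)$ lies inside the domain of $E^{-1}$. On this open set $v(x,y)=\mathrm{Exp}_x^{-1}(y)$ is smooth, and therefore so is $d^2(x,y)=\langle v(x,y),v(x,y)\rangle_{g_x}$, as a composition of smooth objects.

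The main obstacle is purely bookkeeping: ensuring that the set where $E^{-1}$ is genuinely smooth contains the bidisk that appears in the statement. The Lipschitz estimate on $i(\cdot)$ does this comfortably on a ball of radius $\tfrac{1}{3}i(x_0)$, and one can bootstrap to the full radius $i(x_0)$ by iterating the argument on finitely many overlapping smaller balls, or by using the fact that on a normal neighborhood the map $(x,y)\mapsto \mathrm{Exp}_x^{-1}(y)$ is defined and smooth directly; either way, no new analytic input beyond the inverse function theorem and the standard properties of $\mathrm{Exp}$ is needed.
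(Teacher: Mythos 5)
The paper does not actually give a proof here --- it simply cites Chapter~6 of Lee's \emph{Introduction to Riemannian Manifolds} (\cite{L18}). Your argument reconstructs precisely the standard proof from that reference: realize $d^2(x,y)=\langle \mathrm{Exp}_x^{-1}(y),\mathrm{Exp}_x^{-1}(y)\rangle_{g_x}$, show the ``full'' exponential $E(x,v)=(x,\mathrm{Exp}_x(v))$ is a diffeomorphism near the zero section via the inverse function theorem and the identity $d(\mathrm{Exp}_x)_0=\mathrm{id}$, and conclude that $(x,y)\mapsto\mathrm{Exp}_x^{-1}(y)$ is smooth. So the core of your proposal is correct and matches the source.

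One caveat is worth flagging, though it applies equally to the paper's own statement. Your final ``bootstrap to the full radius $i(x_0)$ by iterating on overlapping smaller balls'' does not actually close the gap: if $x$ and $y$ lie near antipodal parts of $\partial B_{i(x_0)}(x_0)$, then $d(x,y)$ can be close to $2\,i(x_0)$ and in particular may exceed $i(x)$, so $(x,y)$ lies in no bidisk where the inverse-exponential argument applies, and minimizing geodesics between $x$ and $y$ need not even be unique. A union of smaller good bidisks does not cover the full product $B_{i(x_0)}(x_0)\times B_{i(x_0)}(x_0)$. The honest conclusion of your argument is smoothness on $B_{r}(x_0)\times B_{r}(x_0)$ for some $r\le i(x_0)/3$ (or, with the standard ``uniformly normal neighborhood'' lemma, for $r$ chosen so that $\mathrm{Exp}_q$ is a diffeomorphism on a ball of radius $2r$ for every $q\in B_r(x_0)$). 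Since the paper only ever invokes the proposition on balls of radius $i(x_0)/2$ and smaller, nothing downstream is affected, but the ``full radius'' claim should be dropped or replaced by the precise uniformly-normal-neighborhood statement.
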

     
\begin{proof} See Chapter 6 in \cite{L18}. 
\end{proof}
We conclude that  given a compact subset $K\subset\mathbb{R}^n$, there exists a  constant $C_0(K)>0$ such that the function $v_{y}(x)= d^2(x,y)$ satisfies
\begin{equation}
|D^2_\mathfrak{X}v_y(x)|_g\le C_0(K),
\end{equation}
whenever $x,y \in B_{i(x_0)/2}(x_0)$ for all $x_0\in K$.

\begin{proposition}\label{boundford2} Given a compact subset $K\subset\mathbb{R}^n$, there exists a  constant $C_1(K)>0$ such that the function $u_{y}(x)= d(x,y) $ satisfies
\begin{equation}
   |D^2_\mathfrak{X}u_y(x)|_g\le C_1(K) \frac{1}{d(x,y)},
   \end{equation}
   whenever $x,y \in B_{i(x_0)/2}(x_0)$ for all $x_0\in K$.
\end{proposition}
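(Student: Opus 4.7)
The plan is to derive the bound directly from Proposition \ref{dproperties2} by writing $u_y(x) = d(x,y) = \sqrt{v_y(x)}$ and differentiating along the frame, using the eikonal equation from Proposition \ref{dproperties1} to control the first-order quantities that arise. The key point is that although $u_y$ becomes singular (in its derivatives) as $x \to y$, its square $v_y$ is smooth across the diagonal, and the chain rule transfers the singularity in a controllable, explicit way.

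First I would compute, for $x \neq y$, the frame derivatives. Since $v_y = u_y^2$, we have $X_i v_y = 2 u_y \, X_i u_y$, so
\begin{equation*}
X_i u_y = \frac{X_i v_y}{2 u_y}.
\end{equation*}
Differentiating again,
\begin{equation*}
X_j X_i u_y = \frac{X_j X_i v_y}{2 u_y} - \frac{(X_i v_y)(X_j u_y)}{2 u_y^2} = \frac{X_j X_i v_y}{2 u_y} - \frac{(X_i u_y)(X_j u_y)}{u_y}.
\end{equation*}
The same relation holds for the symmetrized matrix $(D_\mathfrak{X}^2 u_y)^*$ with $(D_\mathfrak{X}^2 v_y)^*$ on the right-hand side.

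Next I would estimate each term. Proposition \ref{dproperties1} gives $|D_\mathfrak{X} u_y|_g = 1$ wherever $u_y$ is smooth, so $|X_i u_y| \le 1$ for each $i$, and hence the rank-one piece $(X_i u_y)(X_j u_y)$ is bounded by $1$ in absolute value. For the first term, Proposition \ref{dproperties2} yields $|D_\mathfrak{X}^2 v_y(x)|_g \le C_0(K)$ on the relevant balls. Taking the $g$-norm (equivalently, the Frobenius norm in the $\mathfrak X$-basis), we obtain
\begin{equation*}
|D_\mathfrak{X}^2 u_y(x)|_g \le \frac{|D_\mathfrak{X}^2 v_y(x)|_g}{2\, u_y(x)} + \frac{|D_\mathfrak{X} u_y(x)|_g^{\,2}}{u_y(x)} \le \frac{C_0(K)/2 + 1}{d(x,y)},
\end{equation*}
which is the desired inequality with $C_1(K) = C_0(K)/2 + 1$.

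There is really no serious obstacle: the only care needed is to ensure that $u_y$ is smooth at $x$, which is guaranteed by $x \neq y$ with both points in $B_{i(x_0)/2}(x_0)$ (so that $d(x,y) < i(x_0)$ and $x$ lies inside the injectivity ball around $y$, where the distance function to $y$ is smooth by the standard Riemannian fact cited in the proof of Proposition \ref{dproperties1}). The mildly delicate step is the algebraic identity for $X_j X_i u_y$, where one must use the eikonal equation to recognize the rank-one term $(X_i u_y)(X_j u_y)$ as bounded; this is what prevents the naive $1/u_y^2$ blow-up and produces the sharp $1/d(x,y)$ rate.
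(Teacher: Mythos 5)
Your argument is essentially the paper's own proof: both differentiate the smooth function $d^2(x,y)$ twice in the frame, solve for $X_i X_j d$ in terms of $X_i X_j d^2$ and the first derivatives, and then bound the two resulting terms using the uniform bound $|D^2_{\mathfrak{X}} d^2|_g \le C_0(K)$ from Proposition \ref{dproperties2} and the eikonal equation from Proposition \ref{dproperties1}. The only (immaterial) difference is that you work at the level of the matrix $g$-norm and obtain the constant $C_0(K)/2+1$, whereas the paper estimates entry-by-entry and takes $C_1(K)=n^2(C_0(K)/2+1)$.
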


\begin{proof}
For $y\in B_{i(x_0)/2}(x_0)$  and  
  $x\not=y$ we have
$$\begin{array}{rcl}
 X_i(x) \left(X_j(x) \, d^2(x,y)\right)& = & X_i(x) \left( 2 d(x,y) \, X_j(x)(d(x,y)  )    \right)\\
 &=& 2 X_i(x)(d(x,y))\, X_j(x)(d(x,y))+ 2\, d(x,y) X_i(x)(X_j(x)(d(x,y)),
 \end{array} $$ from which we deduce
 $$
 \begin{array}{crl}
 |X_i(x)(X_j(x)(d(x,y))| & \le &  \frac{C_0}{2\, d(x,y)} +  \frac{ | X_i(x)(d(x,y))| \, |X_j(x)(d(x,y)|}{ d(x,y)} \\
  & \le &  \frac{C_0/2+1}{d(x,y)}.
 \end{array}
 $$
 We can then take $ C_1(K)= n^2(C_0(K)/2+1)$.
 \end{proof}\par

 \section{Second Derivatives of the Metric}\label{three}
 \par
  In this section we work in a region where the function of two variables $(x,y)\mapsto d(x,y)$ is smooth. This is the case when $x$ and $y$ are in the ball $ B_{i(z)}(z)$ for some point $z$ and $x\not=y$. 
  Our starting point is that for fixed $y$ the function $x\mapsto d(x,y)$  satisfies the eikonal equation in a punctured neighborghood of $y$ 
  \begin{equation}
  \label{eikonal1} \sum_{i=1}^n (X_i^x d(x,y))^2=1,\end{equation}
  where we have written $X_i^x$ to indicate that the vector field $X_i$ is acting on the $x$ variable. See Proposition \ref{dproperties1} above.
  Similarly, for a fixed $x$ the function $y\mapsto d(x,y)$  satisfies the eikonal equation in a punctured neighborghood of $x$ 
  \begin{equation}
  \label{eikonal2} \sum_{i=1}^n (X_i^y d(x,y))^2=1,\end{equation}
  where we have written $X_i^y$ to indicate that the vector field $X_i$ is acting on the $y$ variable.
  Next we apply $X_j^x$ and $X_j^y$ to both \eqref{eikonal1} and \eqref{eikonal2}  obtaining the following result whose proof is a straightforward computation.
  \begin{lemma}\label{goodeikonal} For $j=1,\ldots, n$ we have
  $$
  \begin{array}{rclrcl}
  \displaystyle\sum_{i=1}^n X_i^x d\,  X_j^x X_i^x d= 0 &, &   \displaystyle\sum_{i=1}^n X_i^y d\,  X_j^x X_i^y d=0, \\
  \displaystyle\sum_{i=1}^n X_i^x d\,  X_j^y X_i^x d= 0 &, &   \displaystyle\sum_{i=1}^n X_i^y d\,  X_j^y X_i^y d=0.   \end{array} 
  $$
  \end{lemma}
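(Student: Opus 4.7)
The plan is to obtain all four identities by differentiating the two eikonal equations \eqref{eikonal1} and \eqref{eikonal2} directly. Since $(x,y)\mapsto d(x,y)$ is smooth in the region where $x,y\in B_{i(z)}(z)$ and $x\neq y$ (Proposition \ref{dproperties2} together with Proposition \ref{dproperties1}), any of the first-order operators $X_j^x$ or $X_j^y$ may be applied classically to both sides of the two eikonal equations, and the operators $X_i^x$ and $X_j^y$ commute since they differentiate with respect to disjoint sets of coordinates.

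First I would apply $X_j^x$ to \eqref{eikonal1}. By the chain rule,
$$X_j^x\Big(\sum_{i=1}^n (X_i^x d)^2\Big)=2\sum_{i=1}^n X_i^x d \cdot X_j^x X_i^x d,$$
and since the right-hand side of \eqref{eikonal1} is the constant $1$, this expression vanishes, yielding the first identity. Applying the same operator $X_j^x$ to \eqref{eikonal2}, and using that $X_j^x$ commutes with the $y$-derivatives $X_i^y$, gives
$$0=X_j^x(1)=2\sum_{i=1}^n X_i^y d \cdot X_j^x X_i^y d,$$
which is the second identity. The third and fourth identities follow by the symmetric procedure, applying $X_j^y$ to \eqref{eikonal1} and to \eqref{eikonal2} respectively; in each case the right-hand side is constant, and in the former case one uses the commutation of $X_j^y$ with $X_i^x$.

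There is no real obstacle in this computation. The only points requiring care are the smoothness needed to legitimately apply the frame vector fields to both sides (which is guaranteed in the stated region) and the fact that $X_i^x$ and $X_j^y$ commute, which reduces each calculation to a single application of the Leibniz rule followed by division by $2$.
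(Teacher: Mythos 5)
Your proof is correct and is exactly the "straightforward computation" the paper has in mind: apply $X_j^x$ and $X_j^y$ to the two eikonal equations \eqref{eikonal1} and \eqref{eikonal2} and use the Leibniz rule, the right side being the constant $1$. (The commutation of $X_i^x$ with $X_j^y$ that you invoke is true but not actually needed here, since each identity already appears in the order produced directly by differentiating the square.)
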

  We introduce the following $n\times n$ matrices of second derivatives:
  $$ \begin{array}{rcl}
  (D_\mathfrak{X}^{2,x}u )_{ij} =  X_i^x X_j^x u   &, &   (D_\mathfrak{X}^{2,x,y}u )_{ij} =   X_i^x X_j^y u  \\
    (D_\mathfrak{X}^{2,y,x}u )_{ij} =   X_i^y X_j^x u   &, &   (D_\mathfrak{X}^{2,y}u )_{ij} =   X_i^y X_j^y u.     \end{array}
  $$
  With this notation, recalling Lemma \ref{goodeikonal}, we obtain
  \begin{equation}\label{matrixeikonal}
  \begin{array}{rcl}
  D_\mathfrak{X}^{2,x}d \cdot D_\mathfrak{X}^x d =0 & ,&  D_\mathfrak{X}^{2,x,y}d \cdot D_\mathfrak{X}^y d=0 ,\\
    D_\mathfrak{X}^{2,y,x}d \cdot D_\mathfrak{X}^x d =0 & ,&  D_\mathfrak{X}^{2,y,y}d \cdot D_\mathfrak{X}^y d=0.
   \end{array}
 \end{equation}
To keep the notation simpler we also denote by $\mathfrak{Z}$ the frame  $\mathfrak{X}\otimes\mathfrak{X}$ in $\mathbb{R}^n\times\mathbb{R}^n$ obtaining by 
 considering two copies of $\mathfrak{X}$.  

  The $\mathfrak{Z}$-gradient of a function $u(x,y)$ in the variables $(x,y)$ is the $2n \times 1$ vector field
 $$D_\mathfrak{Z}u= \left(\begin{array}{c} D_\mathfrak{X}^x u \\
 D_\mathfrak{X}^y  u \end{array}\right).
 $$
 Note that $|D_\mathfrak{Z}d |_g= \sqrt{ |D^x_\mathfrak{X}d |_g^2+  |D^y_\mathfrak{X}d |_g^2} = \sqrt{2}$.
The  second derivative of $u(x,y)$  is given by the $2n \times 2n$ matrix
 $$D^2_\mathfrak{Z}u= \left(\begin{array}{cc} D^{2,x}_\mathfrak{X} u & D^{2,x,y}_\mathfrak{X} u  \\
D^{2,y,x}_\mathfrak{X} u & D^{2,y}_\mathfrak{X} u  \end{array}\right).
 $$ 
 From the identities \eqref{matrixeikonal} it follows  that
 \begin{equation}\label{eikonalbest}
 D_\mathfrak{Z}^2 d \cdot D_\mathfrak{Z}d=0
 \end{equation} and,  similarly for the symmetrized second derivatives, we obtain
  \begin{equation}\label{eikonalbest2}
 \langle (D_\mathfrak{Z}^2 d)^*\cdot D_\mathfrak{Z}d, D_\mathfrak{Z}d\rangle_g =0.
 \end{equation} 
 Since we have $D_\mathfrak{Z} d^\alpha= \alpha d^{\alpha-1} D_\mathfrak{Z} d$ and 
 $D^2_\mathfrak{Z} d^\alpha= \alpha d^{\alpha-1} D_\mathfrak{Z}^2 d+ \alpha (\alpha -1) d^{\alpha-2} (D_\mathfrak{Z} d
 \otimes D_\mathfrak{Z}d)$ we get
 $$\langle D_\mathfrak{Z}^2 d^{\alpha} \cdot D_\mathfrak{Z}d^{\alpha}, D_\mathfrak{Z}d^{\alpha}\rangle_g =
 \alpha^3 (\alpha-1) d^{3\alpha -4} \langle (D_\mathfrak{Z} d
 \otimes D_\mathfrak{Z}d)\cdot  D_\mathfrak{Z} d, D_\mathfrak{Z} d\rangle_g$$ and
  $\langle (D_\mathfrak{Z} d
 \otimes D_\mathfrak{Z}d)\cdot  D_\mathfrak{Z} d, D_\mathfrak{Z} d\rangle_g= | D_\mathfrak{Z} d|_g^4=4$.
 Summarizing,  we have proved the following lemma.
 \begin{lemma}\label{dalpha}
 $$\begin{array}{rcl}
 \langle D_\mathfrak{Z}^2 d^{\alpha} \cdot D_\mathfrak{Z}d^{\alpha}, D_\mathfrak{Z}d^{\alpha}\rangle_g&=& 4 \alpha^3 (\alpha-1) d^{3\alpha -4} ,\\
  \langle (D_\mathfrak{Z}^2 d^{\alpha} )^*\cdot D_\mathfrak{Z}d^{\alpha}, D_\mathfrak{Z}d^{\alpha}\rangle_g&=& 4 \alpha^3 (\alpha-1) d^{3\alpha -4} .\end{array}$$
  \end{lemma}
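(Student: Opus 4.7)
The plan is to reduce both identities to the orthogonality relation $D_\mathfrak{Z}^2 d \cdot D_\mathfrak{Z}d=0$ from \eqref{eikonalbest}, via a chain-rule expansion of $d^\alpha$. Since $d^\alpha$ is the pointwise composition of the smooth function $t\mapsto t^\alpha$ with $d$ (on the region where $x\ne y$ and both lie in a ball of injectivity radius), its frame derivatives split cleanly into a ``Hessian'' term proportional to $D_\mathfrak{Z}^2 d$ and a ``rank-one'' term involving $D_\mathfrak{Z}d\otimes D_\mathfrak{Z}d$, and only the latter will survive after contracting against $D_\mathfrak{Z}d^\alpha$.

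First I would compute, entry by entry via the Leibniz rule applied to the frame operators,
\begin{equation*}
D_\mathfrak{Z}d^\alpha \;=\; \alpha\, d^{\alpha-1}\, D_\mathfrak{Z}d, \qquad
D_\mathfrak{Z}^2 d^\alpha \;=\; \alpha\, d^{\alpha-1}\, D_\mathfrak{Z}^2 d \;+\; \alpha(\alpha-1)\, d^{\alpha-2}\, D_\mathfrak{Z}d\otimes D_\mathfrak{Z}d,
\end{equation*}
where I have used $X_i^{\bullet}(\alpha d^{\alpha-1} X_j^{\bullet} d)=\alpha(\alpha-1)d^{\alpha-2}(X_i^{\bullet}d)(X_j^{\bullet}d)+\alpha d^{\alpha-1} X_i^{\bullet}X_j^{\bullet}d$ for each pair of bullet choices in $\{x,y\}$. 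Next I multiply $D_\mathfrak{Z}^2 d^\alpha$ by the column vector $D_\mathfrak{Z}d^\alpha$: the first summand contributes $\alpha^2 d^{2\alpha-2}(D_\mathfrak{Z}^2 d\cdot D_\mathfrak{Z}d)$, which vanishes by \eqref{eikonalbest}, while the second summand gives $\alpha^2(\alpha-1)\, d^{2\alpha-3}\,|D_\mathfrak{Z}d|_g^{2}\, D_\mathfrak{Z}d$, using $(D_\mathfrak{Z}d\otimes D_\mathfrak{Z}d)\cdot D_\mathfrak{Z}d=|D_\mathfrak{Z}d|_g^{2}\,D_\mathfrak{Z}d$. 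Finally I take the $g$-inner product with $D_\mathfrak{Z}d^\alpha=\alpha d^{\alpha-1}D_\mathfrak{Z}d$ and recall, from adding the eikonal identities in the $x$ and $y$ variables, that $|D_\mathfrak{Z}d|_g^{2}=2$, yielding $4\alpha^3(\alpha-1)d^{3\alpha-4}$.

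For the symmetrized identity I would just observe that in the orthonormal frame $\mathfrak{X}$ the $g$-inner product is the ordinary dot product in frame coordinates, so for any $n\times n$ matrix $M$ and vector $v$ one has $\langle Mv,v\rangle_g=\langle M^{t}v,v\rangle_g$ by relabelling summation indices, and hence $\langle Mv,v\rangle_g=\langle M^{*}v,v\rangle_g$. Applying this with $M=D_\mathfrak{Z}^2 d^\alpha$ and $v=D_\mathfrak{Z}d^\alpha$ converts the first equality into the second.

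There is no serious obstacle: the only thing to watch is that the Leibniz expansion of $D_\mathfrak{Z}^2 d^\alpha$ really does produce exactly the two displayed terms with no leftover first-order contributions, which is immediate because the frame operators act on the scalar $d^\alpha$ rather than on a vector-valued object. All other ingredients (smoothness of $d$ off the diagonal inside the injectivity ball, the eikonal identities, and $|D_\mathfrak{Z}d|_g^{2}=2$) are already supplied by Proposition \ref{dproperties1} and the discussion preceding the lemma.
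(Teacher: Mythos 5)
Your proposal is correct and follows essentially the same route as the paper: both expand $D_\mathfrak{Z}d^\alpha=\alpha d^{\alpha-1}D_\mathfrak{Z}d$ and $D^2_\mathfrak{Z}d^\alpha=\alpha d^{\alpha-1}D_\mathfrak{Z}^2 d+\alpha(\alpha-1)d^{\alpha-2}D_\mathfrak{Z}d\otimes D_\mathfrak{Z}d$ by the chain rule, kill the first term with the eikonal identity \eqref{eikonalbest}, and then use $|D_\mathfrak{Z}d|_g^2=2$ on the rank-one term. Your treatment of the symmetrized version (via $\langle Mv,v\rangle_g=\langle M^*v,v\rangle_g$ in frame coordinates) is a clean way to handle what the paper leaves implicit; just note that the relevant matrices here are $2n\times 2n$, not $n\times n$.
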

  Choosing $\alpha=4/3$ we obtain 
  \begin{lemma}\label{fourthirds}
  \begin{equation*}\label{43} \Delta_{\mathfrak{Z}, \infty} d^{\frac43}= (\frac{4}{3})^4.
  \end{equation*} 
  \end{lemma}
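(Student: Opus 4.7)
The plan is to obtain Lemma \ref{fourthirds} as an immediate specialization of Lemma \ref{dalpha}. By the definition of the $\mathfrak{Z}$-infinity Laplacian,
$$\Delta_{\mathfrak{Z},\infty} u = \langle (D_\mathfrak{Z}^2 u)^* \cdot D_\mathfrak{Z} u, D_\mathfrak{Z} u \rangle_g,$$
so the second identity in Lemma \ref{dalpha} reads
$$\Delta_{\mathfrak{Z},\infty} d^\alpha = 4\alpha^3(\alpha-1)\, d^{3\alpha - 4}.$$
The only task is to choose $\alpha$ so that the right-hand side is independent of $d$.

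The key observation is that the exponent $3\alpha - 4$ vanishes precisely when $\alpha = \tfrac{4}{3}$. Substituting this value yields
$$\Delta_{\mathfrak{Z},\infty} d^{\frac{4}{3}} = 4\left(\tfrac{4}{3}\right)^3\left(\tfrac{4}{3} - 1\right) = 4\left(\tfrac{4}{3}\right)^3 \cdot \tfrac{1}{3} = \left(\tfrac{4}{3}\right)^4,$$
which is the claimed identity.

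There is no genuine obstacle here: the whole content of the lemma is absorbed into Lemma \ref{dalpha}, which in turn rests on the eikonal-type identity \eqref{eikonalbest} for the doubled frame $\mathfrak{Z}$. The only care needed is to note that the computation is valid wherever $(x,y) \mapsto d(x,y)$ is smooth with $x \neq y$, i.e.\ in the punctured ball guaranteed by Propositions \ref{dproperties1} and \ref{dproperties2}, which is the regime in which Lemma \ref{dalpha} was stated.
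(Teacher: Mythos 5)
Your proof is correct and is exactly the paper's argument: the paper obtains Lemma \ref{fourthirds} by the one-line remark ``Choosing $\alpha=4/3$'' in Lemma \ref{dalpha}, and your substitution $4\left(\tfrac{4}{3}\right)^3\cdot\tfrac{1}{3}=\left(\tfrac{4}{3}\right)^4$ reproduces that computation.
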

 The following identity follows easily from the fact that $D_\mathfrak{X}^x d $ and $D_\mathfrak{X}^y d$ are unit vectors
 \begin{equation}\label{tensorsquared}
( D_\mathfrak{Z} d\otimes D_\mathfrak{Z} d)^2=  2 (D_\mathfrak{Z} d\otimes D_\mathfrak{Z} d ). \end{equation} 
 \begin{lemma}\label{dalphasquared}
 $$ 
 \langle (D_\mathfrak{Z}^2 d^{\alpha} )^2\cdot D_\mathfrak{Z}d^{\alpha}, D_\mathfrak{Z}d^{\alpha}\rangle_g= 8 \alpha^4 (\alpha-1)^2 d^{4\alpha -6}. $$
  \end{lemma}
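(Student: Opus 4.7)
The plan is to expand $(D_\mathfrak{Z}^2 d^\alpha)^2 \cdot D_\mathfrak{Z} d^\alpha$ by a direct computation that uses the chain rule already recorded for $d^\alpha$, together with the eikonal-type identity \eqref{eikonalbest} and the idempotency-up-to-a-factor relation \eqref{tensorsquared}. To streamline, I would set $v := D_\mathfrak{Z} d$ and $A := D_\mathfrak{Z}^2 d$, so that Proposition \ref{dproperties1} together with \eqref{eikonalbest} gives $|v|_g^2 = 2$ and $A v = 0$, while \eqref{tensorsquared} is the identity $(v\otimes v)^2 = 2\,(v\otimes v)$. Then I would record the two basic identities
\[
D_\mathfrak{Z} d^{\alpha} = \alpha\, d^{\alpha-1}\, v, \qquad D_\mathfrak{Z}^2 d^{\alpha} = \alpha\, d^{\alpha-1}\, A + \alpha(\alpha-1)\, d^{\alpha-2}\, v\otimes v,
\]
which are exactly the ones established just before Lemma \ref{dalpha}.

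The main calculation is then a two-step application of $D_\mathfrak{Z}^2 d^\alpha$ to $D_\mathfrak{Z} d^\alpha$. First,
\[
D_\mathfrak{Z}^2 d^{\alpha}\cdot D_\mathfrak{Z} d^{\alpha} = \alpha^2 d^{2\alpha-2}\, A v + \alpha^2(\alpha-1)\, d^{2\alpha-3}\, (v\otimes v)\, v = 2\alpha^2(\alpha-1)\, d^{2\alpha-3}\, v,
\]
where the first term vanishes because $Av=0$ and the second uses $(v\otimes v)\,v = |v|_g^2\, v = 2v$. Applying $D_\mathfrak{Z}^2 d^{\alpha}$ once more to this vector and invoking the same two relations gives
\[
(D_\mathfrak{Z}^2 d^{\alpha})^2 \cdot D_\mathfrak{Z} d^{\alpha} = 2\alpha^2(\alpha-1)\, d^{2\alpha-3}\,\bigl[\alpha d^{\alpha-1} A v + \alpha(\alpha-1) d^{\alpha-2}\,(v\otimes v)\,v\bigr] = 4\alpha^3(\alpha-1)^2\, d^{3\alpha-5}\, v.
\]
Pairing with $D_\mathfrak{Z} d^{\alpha} = \alpha d^{\alpha-1} v$ and using $\langle v,v\rangle_g = 2$ yields the claimed value $8\alpha^4(\alpha-1)^2\, d^{4\alpha-6}$.

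There is essentially no obstacle: the argument is a bookkeeping exercise once \eqref{eikonalbest} and \eqref{tensorsquared} are in hand. The only point worth double-checking is the consistent treatment of the non-symmetric operator $D_\mathfrak{Z}^2 d^\alpha$ (versus its symmetrization): since we are multiplying the operator by itself and then pairing the result with $D_\mathfrak{Z} d^\alpha$, no symmetrization is needed, and the relations $Av=0$ and $(v\otimes v)v = 2v$ are exactly what kills the cross terms at each stage.
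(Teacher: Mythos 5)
Your proof is correct and takes essentially the same approach as the paper: both expand via the chain rule for $d^\alpha$ and kill cross terms using $D_\mathfrak{Z}^2 d \cdot D_\mathfrak{Z}d = 0$ together with $(D_\mathfrak{Z}d \otimes D_\mathfrak{Z}d)\, D_\mathfrak{Z}d = 2\, D_\mathfrak{Z}d$. The only cosmetic difference is that the paper first expands the matrix $(D_\mathfrak{Z}^2 d^\alpha)^2$ into four terms and then pairs, while you apply $D_\mathfrak{Z}^2 d^\alpha$ to the vector twice; the underlying algebra is identical.
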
 
 \begin{proof} Let us compute $ (D_\mathfrak{Z}^2 d^{\alpha} )^2$:
 $$ 
 \begin{array}{rcl}
 (D_\mathfrak{Z}^2 d^{\alpha} )^2& =&  ( \alpha d^{\alpha-1} D_\mathfrak{Z}^2 d+ \alpha (\alpha -1) d^{\alpha-2} (D_\mathfrak{Z} d
 \otimes D_\mathfrak{Z}d) )^2\\
 & =&  \alpha^2 d^{2\alpha -2} (D^2_\mathfrak{Z} d)^2 + \alpha^2 (\alpha -1) d^{2\alpha -3}  D^2_\mathfrak{Z} d\, (D_\mathfrak{Z} d
 \otimes D_\mathfrak{Z}d) \\
 &&+ \alpha^2 (\alpha -1) d^{2\alpha -3} (D_\mathfrak{Z} d
 \otimes D_\mathfrak{Z}d)\,  D^2_\mathfrak{Z} d 
 +\alpha^2(\alpha-1)^2 d^{2\alpha-4}D_\mathfrak{Z} d
 \otimes D_\mathfrak{Z}d) )^2.
 \end{array}
 $$
 In the expression $ \langle (D_\mathfrak{Z}^2 d^{\alpha} )^2\cdot D_\mathfrak{Z}d^{\alpha}, D_\mathfrak{Z}d^{\alpha}\rangle$ there are four terms.
The first and third terms vanish because of  \eqref{eikonalbest}. The second term also vanishes since  $D^2_\mathfrak{Z} d\, (D_\mathfrak{Z} d
 \otimes D_\mathfrak{Z}d)=0$ by \eqref{matrixeikonal}. We are left with the fourth term
$$ \begin{array}{rcl}
 \alpha^2(\alpha-1)^2 \langle (D_\mathfrak{Z} d
 \otimes D_\mathfrak{Z}d) )^2\cdot D_\mathfrak{Z}d^{\alpha} ,D_\mathfrak{Z}d^{\alpha}\rangle_g& = & 2 \alpha^2(\alpha-1)^2d^{2\alpha-4} \langle (D_\mathfrak{Z} d
 \otimes D_\mathfrak{Z}d) \cdot D_\mathfrak{Z}d^{\alpha} ,D_\mathfrak{Z}d^{\alpha}\rangle_g\\
 &=& 2 \alpha^4(\alpha-1)^2 d^{4\alpha-6}  \langle (D_\mathfrak{Z} d
 \otimes D_\mathfrak{Z}d) \cdot D_\mathfrak{Z}d ,D_\mathfrak{Z}d\rangle_g \\
 &=& 8 \alpha^4(\alpha-1)^2 d^{4\alpha-6}  . \end{array}
 $$\end{proof}
  We record the identity we get taking $\alpha= 3/2$, although we will not need it in the rest of the paper,
\begin{equation}\label{threehalves}
 \langle (D_\mathfrak{Z}^2 d^{3/2} )^2\cdot D_\mathfrak{Z}d^{3/2}, D_\mathfrak{Z}d^{3/2}\rangle_g= \frac{81}{8}. 
 \end{equation}
 We will  also need to control a similar  term with the symmetrized second derivatives. We first consider    $\langle ((D_\mathfrak{Z}^2 d)^*)^2\cdot D_\mathfrak{Z}d, D_\mathfrak{Z}d\rangle_g$.
 \begin{lemma}\label{symmetrizedbound}
 Given a compact set $K\subset\mathbb{R}^n$ we can find a constant $C(K,\mathfrak{X})$ depending on $K$ and the frame $\mathfrak{X}$ so that $$0\le \langle ((D_\mathfrak{Z}^2 d)^*)^2\cdot D_\mathfrak{Z}d, D_\mathfrak{Z}d\rangle_g\le C(K,\mathfrak{X}).$$
  \end{lemma}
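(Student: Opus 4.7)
The plan is as follows. Write $M = D^2_\mathfrak{Z} d$ and $v = D_\mathfrak{Z} d$, and recall that, since the frame $\mathfrak{Z}$ is orthonormal for the product metric, in frame coordinates $\langle\cdot,\cdot\rangle_g$ is just the standard Euclidean inner product. Consequently $M^* = (M+M^t)/2$ is a symmetric matrix, hence self-adjoint with respect to $\langle\cdot,\cdot\rangle_g$, so
$$\langle (M^*)^2 v, v\rangle_g \;=\; \langle M^* v, M^* v\rangle_g \;=\; |M^* v|_g^2 \;\ge\; 0,$$
which gives the lower bound immediately. For the upper bound, the eikonal identity \eqref{eikonalbest} yields $Mv=0$, hence $M^*v = \tfrac12 M^t v$, and it suffices to produce a compact-set bound for $|M^t v|_g$.

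Compute $(M^t v)_I = \sum_J M_{JI} v_J$ in frame coordinates and consider first the case $I\le n$ (the case $I>n$ is symmetric in $x,y$). Split the sum into its $x$-block and $y$-block contributions. The $y$-block reads $\sum_J (X_J^y X_I^x d)(X_J^y d)$; since $X_I^x$ and $X_J^y$ act on independent variables they commute, so this equals $X_I^x\bigl[\tfrac12\sum_J (X_J^y d)^2\bigr] = X_I^x(1/2) = 0$ by the eikonal equation \eqref{eikonal2}. The $x$-block reads $\sum_J (X_J^x X_I^x d)(X_J^x d)$; writing $X_J^x X_I^x = X_I^x X_J^x + [X_J^x, X_I^x]$, the commuting part gives $X_I^x\bigl[\tfrac12\sum_J (X_J^x d)^2\bigr] = 0$ by \eqref{eikonal1}, and only the commutator term survives:
$$(M^t v)_I \;=\; \sum_{J=1}^{n}\bigl([X_J^x,X_I^x]\, d\bigr)\, (X_J^x d).$$
The case $I>n$ produces the analogous expression with $y$-copies of the brackets.

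Finally, expand each Lie bracket in the frame as $[X_J, X_I] = \sum_m \alpha_{JI}^m X_m$ with smooth structure coefficients $\alpha_{JI}^m$, so that $[X_J^x, X_I^x] d = \sum_m \alpha_{JI}^m(x)\, X_m^x d$. The eikonal equation provides the pointwise bounds $|X_m^x d|, |X_J^x d|\le 1$, yielding
$$|(M^t v)_I| \;\le\; \sum_{J,m} \|\alpha_{JI}^m\|_{L^\infty(K)},$$
uniformly for $x\in K$ (and analogously for $y\in K$ in the $y$-block). Combining, $|M^* v|_g^2 = \tfrac14 |M^t v|_g^2 \le C(K,\mathfrak{X})$, as required. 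The main obstacle is locating this cancellation: a brute force appeal to Proposition \ref{boundford2} only gives $|(D^2_\mathfrak{Z} d)^*|_g \sim 1/d$, which blows up at the diagonal, whereas testing $M^t$ against the eikonal direction $v$ annihilates every contribution except those generated by non-commutativity of the frame, and those surviving Lie bracket terms see only first derivatives of $d$, which are bounded by $1$ via the eikonal equation.
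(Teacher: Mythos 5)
Your proof is correct and follows essentially the same approach as the paper: both reduce $|M^*v|_g^2$ to the Lie bracket terms $\sum_k[X_i,X_k]d\cdot X_kd$ by exploiting the eikonal identity $D^2_\mathfrak{Z} d\cdot D_\mathfrak{Z}d=0$, and then bound those terms by the Lipschitz bound $|X_kd|\le 1$. The only cosmetic difference is that you first observe $M^*v=\tfrac12 M^tv$ and then decompose, whereas the paper symmetrizes in place via $\frac{X_iX_kd+X_kX_id}{2}=X_iX_kd-\tfrac12[X_i,X_k]d$; the resulting cancellation and commutator bound are identical.
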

 \begin{proof} 
 The proof only uses basic properties of commutators of vector fields. Let us compute
 $$\begin{array}{rcl}
  \langle ((D_\mathfrak{Z}^2 d)^*)^2\cdot D_\mathfrak{Z}d, D_\mathfrak{Z}d\rangle_g &=&
   \langle (D_\mathfrak{Z}^2 d)^*\cdot D_\mathfrak{Z}d,     (D_\mathfrak{Z}^2 d)^*\cdot D_\mathfrak{Z}d\rangle_g\\
   &=& \displaystyle\sum_{i=1}^{2n} ((D_\mathfrak{Z}^2 d)^*\cdot D_\mathfrak{Z}d )_i^2\\
   &=& \displaystyle\sum_{i=1}^{2n} \left(\sum_{k=1}^{2n} ((D_\mathfrak{Z}^2 d)^*)_{ik} (D_\mathfrak{Z}d)_k\right)^2\\
   &=& \displaystyle\sum_{i=1}^{2n} \left(\sum_{k=1}^{2n} \left(\frac{X_i X_k d+ X_k X_i d}{2}        \right) X_k d\right)^2\\
    &=&  \displaystyle\sum_{i=1}^{2n}\left( \sum_{k=1}^{2n} \left(X_i X_k d-\frac{[X_i, X_k]d}{2}      \right) X_k d \right)^2  \\
      &=&  \displaystyle\sum_{i=1}^{2n}\left( \sum_{k=1}^{2n} X_i X_k d\,  X_k d - \frac{[X_i, X_k]d}{2} X_k d  \right)^2  \\  
       &=&  \displaystyle\sum_{i=1}^{2n}\left(\sum_{k=1}^n \frac{[X_i, X_k]d}{2} X_k d  \right)^2  \\          
       &\le &    C(K, \mathfrak{X}  ) ,     \end{array}
$$ 
 where we have used equation \eqref{eikonalbest} in the penultimate line  and the fact that $d$ is Lipschitz in the last line.
  
 \end{proof}
We have $(D^2_\mathfrak{Z} d^\alpha)^*= \alpha \, d^{\alpha-1} (D_\mathfrak{Z}^2 d)^*+ \alpha (\alpha -1)\, d^{\alpha-2} (D_\mathfrak{Z} d
 \otimes D_\mathfrak{Z}d)$  so that
 $$\begin{array}{rcl}
 ((D^2_\mathfrak{Z} d^\alpha)^*)^2 & = & \alpha^2  d^{2\alpha -2} ((D^2_\mathfrak{Z} d)^* )^2\\
 && + \alpha^2 (\alpha-1) d^{2\alpha -3} (D^2_\mathfrak{Z} d)^*(D_\mathfrak{Z} d \otimes D_\mathfrak{Z}d)\\
 && + \alpha^2 (\alpha-1) d^{2\alpha -3} (D_\mathfrak{Z} d \otimes D_\mathfrak{Z}d)(D^2_\mathfrak{Z} d)^* \\
 && + \alpha^2 (\alpha-1)^2 d^{2\alpha-4} (D_\mathfrak{Z} d \otimes D_\mathfrak{Z}d)^2. \end{array}
 $$ 
 Next, we observe that by \eqref{eikonalbest} we have
 $$\langle (D^2_\mathfrak{Z} d)^*(D_\mathfrak{Z} d \otimes D_\mathfrak{Z}d) D_\mathfrak{Z} d, D_\mathfrak{Z} d \rangle_g=0$$ and
 by \eqref{matrixeikonal} and \eqref{eikonalbest} we also have
  $$\langle (D_\mathfrak{Z} d \otimes D_\mathfrak{Z}d)(D^2_\mathfrak{Z} d)^* D_\mathfrak{Z} d, D_\mathfrak{Z} d \rangle_g=0.$$ 
  Using \eqref{tensorsquared} we conclude that
  $$
  \begin{array}{rcl}
  \langle ((D^2_\mathfrak{Z} d^\alpha)^*)^2 D_\mathfrak{Z} d  ,D_\mathfrak{Z} d  \rangle_g & = & \alpha^2  d^{2\alpha -2}  \langle ((D^2_\mathfrak{Z} d)^*)^2 D_\mathfrak{Z} d  ,D_\mathfrak{Z} d  \rangle_g + 2  \alpha^2 (\alpha-1)^2
   \langle (D_\mathfrak{Z} d \otimes D_\mathfrak{Z}d) D_\mathfrak{Z} d ,D_\mathfrak{Z}  d \rangle_g \\
  &=& \alpha^2  d^{2\alpha -2}  \langle ((D^2_\mathfrak{Z} d)^*)^2 D_\mathfrak{Z} d  ,D_\mathfrak{Z} d  \rangle_g + 8  \alpha^2 (\alpha-1)^2 d^{2\alpha-4}.  \\
  \end{array} 
   $$
   Hence, we can conclude this section with the following result whose proof immediately follows from the previous equality.
  \begin{lemma}\label{symmetrizedboundalpha}
 Given a compact set $K\subset\mathbb{R}^n$ we can find a constant $c_0=C(K,\mathfrak{X})$ depending on $K$ and the frame $\mathfrak{X}$ so that $$0\le \langle ((D_\mathfrak{Z}^2 d^\alpha)^*)^2\cdot D_\mathfrak{Z}d, D_\mathfrak{Z}d\rangle_g\le c_0\,\alpha^2  d^{2\alpha -2}+8  \alpha^2 (\alpha-1)^2 d^{2\alpha-4}$$ and
 $$0\le \langle ((D_\mathfrak{Z}^2 d^\alpha)^*)^2\cdot D_\mathfrak{Z}d^\alpha, D_\mathfrak{Z}d^\alpha \rangle_g\le c_0\,\alpha^4  d^{4\alpha -4}+8  \alpha^4 (\alpha-1)^2 d^{2\alpha-6}.$$  \end{lemma}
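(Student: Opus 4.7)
The strategy is to reduce the second inequality to the first, and to prove the first by repeating the expansion that worked for Lemma \ref{dalphasquared} but using the symmetrized version of $D_\mathfrak{Z}^2 d$. The nonnegativity in each bound is automatic since $\langle ((D_\mathfrak{Z}^2 d^\alpha)^*)^2 v, v\rangle_g = |(D_\mathfrak{Z}^2 d^\alpha)^* v|_g^2 \ge 0$ for any vector $v$, so only the upper bounds require work.

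First I would start from the identity
$$(D_\mathfrak{Z}^2 d^\alpha)^* = \alpha\, d^{\alpha-1}(D_\mathfrak{Z}^2 d)^* + \alpha(\alpha-1)\, d^{\alpha-2}\bigl(D_\mathfrak{Z} d \otimes D_\mathfrak{Z} d\bigr),$$
which holds because the rank-one term $D_\mathfrak{Z} d\otimes D_\mathfrak{Z} d$ is already symmetric. Squaring produces four terms; when tested against $D_\mathfrak{Z} d$ on both sides, the two cross terms are killed by \eqref{matrixeikonal} (specifically, $(D_\mathfrak{Z} d\otimes D_\mathfrak{Z} d)(D_\mathfrak{Z}^2 d)^* D_\mathfrak{Z} d=0$ since $(D_\mathfrak{Z}^2 d)^* D_\mathfrak{Z} d\perp_g D_\mathfrak{Z} d$ follows from \eqref{eikonalbest2}; the symmetric mirror image vanishes too). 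The remaining diagonal terms are $\alpha^2 d^{2\alpha-2}\langle ((D_\mathfrak{Z}^2 d)^*)^2 D_\mathfrak{Z} d, D_\mathfrak{Z} d\rangle_g$ and, using \eqref{tensorsquared} together with $|D_\mathfrak{Z} d|_g^2 = 2$,
$$\alpha^2(\alpha-1)^2 d^{2\alpha-4}\,\langle (D_\mathfrak{Z} d\otimes D_\mathfrak{Z} d)^2 D_\mathfrak{Z} d, D_\mathfrak{Z} d\rangle_g = 8\alpha^2(\alpha-1)^2 d^{2\alpha-4}.$$
Plugging Lemma~\ref{symmetrizedbound} into the first of these two surviving terms (which bounds the symmetrized quantity by $c_0=C(K,\mathfrak{X})$) yields exactly the first asserted inequality.

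For the second inequality I would simply exploit $D_\mathfrak{Z} d^\alpha = \alpha\, d^{\alpha-1} D_\mathfrak{Z} d$, so that for any matrix $A$
$$\langle A\, D_\mathfrak{Z} d^\alpha, D_\mathfrak{Z} d^\alpha\rangle_g = \alpha^2 d^{2\alpha-2}\langle A\, D_\mathfrak{Z} d, D_\mathfrak{Z} d\rangle_g,$$
and apply this with $A = ((D_\mathfrak{Z}^2 d^\alpha)^*)^2$. Multiplying the bound already obtained by $\alpha^2 d^{2\alpha-2}$ gives the second inequality (with exponent $4\alpha-6$ in the last term, which appears to be a minor typo in the stated exponent).

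The main obstacle is really already handled: it is the cancellation of the cross terms through the eikonal relations \eqref{matrixeikonal}–\eqref{eikonalbest2}, where care is needed because $(D_\mathfrak{Z}^2 d)^*$ rather than $D_\mathfrak{Z}^2 d$ acts on $D_\mathfrak{Z} d$. The only genuinely non-algebraic input is Lemma~\ref{symmetrizedbound}, which is where the compact set $K$ and the commutator structure of $\mathfrak{X}$ enter through the constant $c_0$.
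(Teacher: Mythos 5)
Your proof is correct and follows the paper's argument essentially line for line: expand $(D_\mathfrak{Z}^2 d^\alpha)^*$ via the product rule, square, kill the cross terms with the eikonal identities \eqref{matrixeikonal}--\eqref{eikonalbest2}, bound the surviving $\langle ((D_\mathfrak{Z}^2 d)^*)^2 D_\mathfrak{Z}d, D_\mathfrak{Z}d\rangle_g$ term by Lemma \ref{symmetrizedbound}, and then multiply by $\alpha^2 d^{2\alpha-2}$ for the second inequality. You are also right that the exponent in the second bound should read $d^{4\alpha-6}$ rather than $d^{2\alpha-6}$ --- this is a typographical slip in the lemma's statement (the subsequent computation in Claim \ref{t1} already uses the corrected exponent $d^{4\alpha-6}$, which reduces to the $d^{\alpha-2}$ factor after division by $d^{3\alpha-4}$).
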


  \section{Viscosity Solutions and Frames  }
  We are studying viscosity solutions of the equation
  \begin{equation}\label{mainequation}
   \Delta_{\mathfrak{X},\infty} u(x) =f(x,u(x), D_{\mathfrak{X}}u(x))
      \end{equation}
   where $f$ is a continuous function satisfying the growth condition \eqref{gradproblem}. 
   We assume that $u$ is a viscosity solution as in Definition \ref{viscositytestfunctions0}. 
       \par 
   We can  use jets adapted to the frame $\mathfrak{X}$ to characterize viscosity sub and supersolutions.   
To define second order superjets of an upper-semicontinuous function $u$,
consider smooth functions $\varphi$ touching $u$ from above at a point $x_0$. The second-order super-jet of the upper-semicontinuous function $u$ at the point $x_0$ is the set 
\begin{align*}
K_{\mathfrak{X}}^{2,+}(u,x_0)  = \bigg\lbrace (D_{\mathfrak{X}} \varphi(x_0), (D_\mathfrak{X}^2\varphi(x_0))^*)
\colon
\varphi \in C^2& \text{\ in a neighborhood of} \ x_0,\  \varphi(x_0) = u(x_0),\\
\varphi(x) & \geq u(x)   \text{  in a neighborhood of} \
x_0\bigg\rbrace.
\end{align*}

For each function $\varphi\in C^2$  and a point $x_0$ we write 
\begin{equation}\label{fitojet}\begin{array}{rcccl}
\eta&  = &   D_{\mathfrak{X}} \varphi(x_0) & = & \big(X_1\varphi(x_0),X_2\varphi(x_0),\ldots, X_n\varphi(x_0)\big)\\
 A_{ij}&= & (D_\mathfrak{X}^2\varphi(x_0))^* &  = & \frac{1}{2} \big(X_i(X_j(\varphi))(x_0)+X_j(X_i(\varphi))(x_0)\big).
\end{array}
\end{equation}
 This representation clearly depends on the frame $\mathfrak{X}$. Using
the Taylor theorem (Lemma \ref{taylor})  for $\varphi$ and the fact that $\varphi$ touches $u$ from
above at $x_0$ we get
\begin{equation}\label{jjet}
u\left(\Theta_{x_0}(t)\right) \le 
u(x_0)+\langle \eta  ,t  \rangle+
\frac{1}{2}\langle  X  t , t  \rangle
+o(|t|^2), \text{ as }t\to 0.
\end{equation}
We may also consider $J_{\mathfrak{X}}^{2,+}(u,x_0)$ defined as the collections
of pairs $(\eta,X)$ such that \eqref{jjet} holds.
 Denoting  by $J^{2,+}(v,  t)$ the standard Euclidean superjets we also get from \eqref{jjet} the equivalence 
 \begin{equation}\label{ofjets} 
(\eta, X)\in J_{\mathfrak{X}}^{2,+}(u,x_0) \iff (\eta, X) \in J^{2,+}(u\circ \Theta_{x_0}, 0)
\end{equation}
Using the identification given by
\eqref{fitojet} it is clear that
$$K_{\mathfrak{X}}^{2,+}(u,x_0) \subset J_{\mathfrak{X}}^{2,+}(u,x_0).$$ In fact, we have equality.
This is the analogue of the Crandall-Ishii Lemma of \cite{Cr97} that was extended to vector fields in \cite{BBM05}:
\begin{lemma}\label{crandall}
$$K_{\mathfrak{X}}^{2,+}(u,x_0)= J_{\mathfrak{X}}^{2,+}(u,x_0).$$
\end{lemma}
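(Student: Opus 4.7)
The plan is to reduce the statement to the classical Euclidean Crandall–Ishii fact via the flow exponential $\Theta_{x_0}$, exploiting the equivalence already recorded as \eqref{ofjets}.

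First, the inclusion $K_{\mathfrak{X}}^{2,+}(u,x_0)\subset J_{\mathfrak{X}}^{2,+}(u,x_0)$ is immediate: if $\varphi\in C^{2}$ touches $u$ from above at $x_{0}$, then applying the Taylor formula of Lemma \ref{taylor} to $\varphi$ and using $\varphi\ge u$ with equality at $x_{0}$ yields \eqref{jjet} with $\eta=D_{\mathfrak{X}}\varphi(x_{0})$ and $X=(D_\mathfrak{X}^{2}\varphi(x_{0}))^{*}$.

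For the non-trivial direction, let $(\eta,X)\in J_{\mathfrak{X}}^{2,+}(u,x_{0})$. By \eqref{ofjets}, this is equivalent to $(\eta,X)\in J^{2,+}(u\circ\Theta_{x_{0}},0)$ in the usual Euclidean sense. The classical Crandall–Ishii characterization (see \cite{Cr97}) gives the equality $K^{2,+}(v,0)=J^{2,+}(v,0)$ for any upper-semicontinuous $v$ at any point; applying this to $v=u\circ\Theta_{x_{0}}$, we obtain a function $\psi\in C^{2}$ in a neighborhood of $0\in\mathbb{R}^{n}$ with $\psi(0)=u(x_{0})$, $\psi\ge u\circ\Theta_{x_{0}}$ in a neighborhood of $0$, and
\begin{equation*}
D\psi(0)=\eta,\qquad D^{2}\psi(0)=X.
\end{equation*}

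Now define $\varphi(x):=\psi\bigl(\Theta_{x_{0}}^{-1}(x)\bigr)$. By Lemma \ref{lemma3}, $\Theta_{x_{0}}$ is a local diffeomorphism between a neighborhood of $0$ and a neighborhood of $x_{0}$, so $\varphi\in C^{2}$ near $x_{0}$. Moreover $\varphi(x_{0})=\psi(0)=u(x_{0})$, and for $x$ near $x_{0}$, writing $t=\Theta_{x_{0}}^{-1}(x)$,
\begin{equation*}
\varphi(x)=\psi(t)\ge u(\Theta_{x_{0}}(t))=u(x),
\end{equation*}
so $\varphi$ touches $u$ from above at $x_{0}$. It remains to identify the frame jet of $\varphi$. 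By Lemma \ref{taylor} applied to $\varphi$,
\begin{equation*}
\varphi(\Theta_{x_{0}}(t))=\varphi(x_{0})+\langle D_{\mathfrak{X}}\varphi(x_{0}),t\rangle+\tfrac{1}{2}\langle (D_{\mathfrak{X}}^{2}\varphi(x_{0}))^{*}t,t\rangle+o(|t|^{2}),
\end{equation*}
while by the ordinary Taylor expansion of $\psi$,
\begin{equation*}
\varphi(\Theta_{x_{0}}(t))=\psi(t)=u(x_{0})+\langle\eta,t\rangle+\tfrac{1}{2}\langle Xt,t\rangle+o(|t|^{2}).
\end{equation*}
Comparing the two expansions and invoking the uniqueness of Taylor coefficients (together with the symmetry of $(D_{\mathfrak{X}}^{2}\varphi(x_{0}))^{*}$ and of $X$) forces $D_{\mathfrak{X}}\varphi(x_{0})=\eta$ and $(D_{\mathfrak{X}}^{2}\varphi(x_{0}))^{*}=X$, so $(\eta,X)\in K_{\mathfrak{X}}^{2,+}(u,x_{0})$.

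The one place that requires care—and the step I expect to be the main technical point—is the equivalence \eqref{ofjets}, which legitimizes the transfer of the problem from $u$ at $x_{0}$ in the frame $\mathfrak{X}$ to $u\circ\Theta_{x_{0}}$ at $0$ in the standard Euclidean frame. This is exactly where the choice of the symmetrized Hessian $(D_{\mathfrak{X}}^{2}\varphi)^{*}$ (rather than the non-symmetric $D_{\mathfrak{X}}^{2}\varphi$) is needed: the Taylor expansion of Lemma \ref{taylor} only sees the symmetric part of the second-order frame derivatives, and this matches the symmetric matrix appearing in the Euclidean superjet of $u\circ\Theta_{x_{0}}$. Once this compatibility is in hand, the Euclidean Crandall–Ishii result does all the remaining work.
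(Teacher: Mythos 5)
Your proof is correct. The paper does not give a proof of Lemma~\ref{crandall}; it only cites \cite{BBM05} for the extension to vector fields, so there is no argument to compare against line by line. Your reduction is the natural self-contained route: use the equivalence \eqref{ofjets} to transfer $(\eta,X)\in J_{\mathfrak{X}}^{2,+}(u,x_0)$ to $(\eta,X)\in J^{2,+}(u\circ\Theta_{x_0},0)$, invoke the classical Euclidean Crandall lemma from \cite{Cr97} to produce a genuine $C^2$ test function $\psi$ with the prescribed Euclidean jet, and then pull it back by $\varphi=\psi\circ\Theta_{x_0}^{-1}$, which is legitimate because $\Theta_{x_0}$ is a local $C^\infty$ diffeomorphism by Lemma~\ref{lemma3}. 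The identification of the frame jet of $\varphi$ with $(\eta,X)$ via comparing the two second-order expansions (Lemma~\ref{taylor} versus the ordinary Taylor expansion of $\psi$) is sound, since both quadratic forms appearing are symmetric and uniqueness of second-order Taylor coefficients applies. Your closing remark is also well taken: the symmetrization $(D_{\mathfrak{X}}^2\varphi)^*$ is exactly what makes the transfer consistent, because the frame Taylor expansion only registers the symmetric part of $D_{\mathfrak{X}}^2\varphi$, matching the symmetric Euclidean Hessian of $\psi$.
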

 We define second order subjets  $J_{\mathfrak{X}}^{2,-}(u,x_0)$ similarly. We are in position to introduce the equivalent definition of viscosity solution  based on jets to our $\infty$-Laplace equation.\par
 
  \begin{definition}\label{viscositytestjets}
   An upper semi-continuous function $u$ is a viscosity subsolution of \eqref{mainequation} in a domain $\Omega\subset\mathbb{R}^n$  if whenever $(\eta, X)\in J_{\mathfrak{X}}^{2,+}(u,x_0)$  for  $x_0\in \Omega$ we have 
   $$  \langle X\cdot \eta, \eta\rangle_g \ge f(x_0, u(x_0), \eta).$$
 A lower semi-continuous function $v$ is a viscosity supersolution of \eqref{mainequation} in a domain $\Omega$  whenever $(\eta, X)\in J_{\mathfrak{X}}^{2,-}(v,x_0)$  for  $x_0\in \Omega$ we have    $$  \langle X\cdot \eta, \eta\rangle_g \le f(x_0, v(x_0), \eta).$$
   \end{definition}
 \par
 
 We shall need the Euclidean Theorem of Sums (see \cite{CIL92}) that we state for functions defined on $D=B_1(0)$ the Euclidean ball of radius $1$ centered at the origin. 
\begin{theorem}\label{sums}
 Let $u$ be upper-semicontinuous  and $v$ be lower-semicontinuous functions in $B_1$. Let $\phi\in C^2(\mathbb{R}^n \times \mathbb{R}^n)$ and suppose that there is a point $(\hat{x}, \hat{y})\in B_1 \times B_1$ such that 
$$u(\hat{x})-v(\hat{y})-\phi(\hat{x}, \hat{y})= \max_{(x,y)\in \overline{B}_1\times \overline{B}_1} \left(
u(x)-v(y)-\phi(x,y)\right). $$
Then for each $\mu>0$ there are symmetric matrices $X_\mu$ and $Y_\mu$ such that
$$(D_x \phi(\hat x, \hat y), X_\mu)\in  \overline{J}^{2, +}(u, \hat{x}), \hskip .3in(-D_y \phi(\hat x, \hat y), Y_\mu)\in  \overline{J}^{2, -}(v, \hat{y}), $$ and we have the estimate

$$
-(\mu + \| D^2\varphi(\hat{x}, \hat{y})\|) \left(\begin{array}{cc}
I & 0      \\
0 & -I
\end{array}\right)
\le 
\left(
\begin{array}{cc}
X_\mu & 0      \\
0 & -Y_\mu
\end{array}
\right)
\le D^2\phi(\hat{x}, \hat{y}) +\frac{1}{\mu} (D^2\phi(\hat{x}, \hat{y}))^2.
$$

\end{theorem}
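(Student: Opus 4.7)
The plan is to follow the classical Crandall-Ishii argument via sup/inf-convolution regularization. Introduce, for small $\lambda > 0$, the sup-convolution $u^\lambda(x) = \sup_y \{u(y) - (2\lambda)^{-1}|x-y|^2\}$ and the inf-convolution $v_\lambda(y) = \inf_z\{v(z) + (2\lambda)^{-1}|y-z|^2\}$. These are respectively semiconvex and semiconcave with constant $1/\lambda$, satisfy $u^\lambda \searrow u$ and $v_\lambda \nearrow v$ locally, and any pair $(p, A) \in J^{2,+}(u^\lambda, x)$ produces a $J^{2,+}$ element for $u$ at a point shifted by $\lambda p$. Consequently, constructing jet data for the regularized functions and passing to the limit yields elements of the closures $\overline{J}^{2,+}(u,\hat x)$ and $\overline{J}^{2,-}(v,\hat y)$, which is what the theorem asserts.

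Next, the function $u^\lambda(x) - v_\lambda(y) - \phi(x,y)$ attains a maximum at $(x_\lambda, y_\lambda)\to(\hat x,\hat y)$. The analytic engine is Jensen's lemma (built on Alexandrov's theorem, applicable because $u^\lambda$ and $v_\lambda$ are semiconvex and semiconcave): after perturbing $\phi$ by an arbitrarily small affine function, there is a nearby maximum point at which $u^\lambda$ and $v_\lambda$ are twice Alexandrov-differentiable, with Hessians $A$ and $B$. The first-order conditions at this maximum identify the gradients with $D_x\phi$ and $-D_y\phi$, while the second-order condition gives the easy upper bound
$$\begin{pmatrix} A & 0 \\ 0 & -B \end{pmatrix} \le D^2\phi(x_\lambda,y_\lambda),$$
and the semiconvexity constants give the lower bound $-\lambda^{-1}I \le \operatorname{diag}(A,-B)$.

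The quantitative sharpening of the upper bound to $D^2\phi + \mu^{-1}(D^2\phi)^2$ is the delicate core of the theorem. Writing $N = D^2\phi(\hat x,\hat y)$, one establishes a matrix-resolvent identity for block matrices that realizes the inequality $\mu(\mu I + N)^{-1}N \le N + \mu^{-1}N^2$; equivalently, one applies a second convolution at scale $\mu^{-1}$ in the joint $2n$-dimensional variable $(x,y)$ paired with $\phi$. This book-keeping allows $\lambda \to 0$ independently of $\mu$, yielding limit matrices $(X_\mu, Y_\mu)$ that satisfy the claimed two-sided bound, with $(D_x\phi(\hat x,\hat y), X_\mu)\in \overline{J}^{2,+}(u,\hat x)$ and $(-D_y\phi(\hat x,\hat y), Y_\mu)\in \overline{J}^{2,-}(v,\hat y)$. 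The main obstacle is precisely this matrix manipulation together with the coordination of the three parameters (the convolution scale $\lambda$, the Jensen perturbation size, and $\mu$) so that the upper bound emerges with the exact quadratic correction $\mu^{-1}(D^2\phi)^2$ in the limit; once that algebraic identity is set up correctly, the remainder is routine limit passage and the standard closedness properties of $\overline{J}^{2,\pm}$.
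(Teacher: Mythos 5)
The paper does not give a proof of this theorem: it is stated as a known result with a citation to \cite{CIL92} (it is Theorem 3.2 of the User's Guide of Crandall, Ishii, and Lions, proved there in the Appendix). So there is no ``paper's own proof'' to compare your attempt against.

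Your sketch correctly names the three pillars of the standard argument --- sup/inf-convolution regularization, Jensen's lemma on top of Alexandrov's theorem, and the matrix algebra producing the $\mu^{-1}(D^2\phi)^2$ correction --- and it correctly points out that one must land in the closures $\overline J^{2,\pm}$ rather than the jets themselves, since the Jensen-perturbed maximum points and first-order conditions only converge to the desired ones. However, as written it is a road map rather than a proof, and the parts you flag as ``delicate'' are left unresolved. In particular: the block Hessian bound at the Jensen point gives $\operatorname{diag}(A,-B)\le D^2\phi(x_\lambda,y_\lambda)$ with the \emph{penalized} $\phi$, not the original one, and reconciling this with the stated bound at $(\hat x,\hat y)$ requires the extra sup-convolution in the joint variable and the resolvent identity $(I-\varepsilon D^2\phi)^{-1}D^2\phi\le D^2\phi+\varepsilon(D^2\phi)^2/(1-\varepsilon\|D^2\phi\|)$, followed by a reparametrization to reach exactly $D^2\phi+\mu^{-1}(D^2\phi)^2$; the scalar inequality $\mu t/(\mu+t)\le t+t^2/\mu$ you invoke is true but is not by itself the mechanism. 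Likewise, the coordination of the three scales ($\lambda\to0$, the Jensen affine perturbation $\to0$, with $\mu$ fixed) and the verification that the limiting matrices remain symmetric and land in the closed jets are exactly where the real work lies, and your proposal only gestures at them. The approach is the right one, but a referee would ask you either to supply those details or to do as the paper does and cite \cite{CIL92}.
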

  
 \section{Lipschitz Estimate: Proof of Theorem \ref{apriori0}} Let $u$ be a viscosity solution of the equation
 \begin{equation}\label{unouno}
  \Delta_{\mathfrak{X},\infty} u(x)=f (x )
 \end{equation}
  in a domain $\Omega\subset\mathbb{R}^n$.  We shall assume that
 $B_{2 \,i(x_0)}\subset\Omega$.  The strategy of the proof taken from \cite{L14} is to reduce the problem to the case when $f\ge 0$, so that
 $u$ is a viscosity subsolution. It the follows from a comparison with the distance function that $u$ is Lipschitz.
 \par
 We add a new variable $x_{n+1}$ and a new vector field $X_{n+1}=\frac{\partial}{\partial x_{n+1}}$. Consider the function
 $$v(x_1, \ldots, x_n, x_{n+1})= u(x_1, \ldots, x_n) +  c \, |x_{n+1}|^{4/3},$$ where $c$ is constant and the extended frame
 $\mathcal{Y}=\{X_1, \ldots, X_n, X_{n+1}\}$.  This frames induces a Riemannian metric $h$ that satisfies
 $$\langle (\xi_1,\eta_1), (\xi_2,\eta_2)\rangle_h=\langle (\xi_1,\xi_2)\rangle_g + \eta_1 \eta_2$$ for $\xi_1, \xi_2\in \Rn$ and
 $\eta_1, \eta_2\in\R$.  In the smooth case we have the identity
 \begin{equation}\label{extension}
 \begin{array}{rcl}
 \langle D^2_\mathcal{Y} v \, D_\mathcal{Y}v, D_\mathcal{Y}v\rangle_h
 & = & \langle D^2_\mathfrak{X} u \, D_\mathfrak{X}u, D_\mathfrak{X}u\rangle_g+ (X^{n+1}X^{n+1}v) (X^{n+1} v)( X^{n+1}v)\\
 & & \\
 & = &  f+c^3 (\frac{4}{3})^3
 
 \end{array}
 \end{equation}
 In fact, this is also true in the viscosity sense. If a function $u$ is a viscosity solution of \eqref{unouno},  the extended function $v$ is a viscosity solution of \eqref{extension}, 
 see Chapter 10  in \cite{L16}.  
 Thus,
 we can assume that $f\ge 0$ by taking an appropriate constant $c$ depending only on $\|f \|_{L^\infty(B_{i(x_0)}(x_0))}$.\par
 Therefore, we may assume  that $u$ is a subsolution of the $\infty$-Laplacian relative to the frame $\mathcal{X}$. 
 Consider the functions    $w(y)= u(x)- u(x_0)$ and $z(y)= M_r \frac{d(x_0, y)}{r}$ on the ball $B_{r}(x_0)$ for $r<i(x_0)/2$, where
  $$M_r=\sup\{w(x)\colon d(x_0,x)=r\}.$$
 We compare these functions in the puncture ball
 $B_{r}(x_0)\setminus\{x_0\}$, where $w$ is $\infty$-subharmonic and $u$ is $\infty$-harmonic. We see that $w \le z$ on $\partial  B_{r}(x_0)\setminus\{x_0\}$ and thus in $B_{r}(x_0)$ by the comparison principle.
 We conclude that
 $$ u(x)-u(x_0) \le M_r \frac{d(x_0, x) }{r}$$  for all $x\in B_{r}(x_0)$. The constant $M_r$ depends only on the $L^\infty$-norm of $u$ on $\overline{B_{i(x_0)/2}(x_0)}$. 
  Using a similar argument for $-u$ we get
 $$\frac{|u(x)-u(x_0)|}{d(x,x_0)}\le \frac{M_{i(x_0)/4}}{i(x_0)/4}\le \frac{ 4 \, \| u \|_{L^\infty(\overline{B_{i(x_0)/2}(x_0)})}}   {i(x_0)}$$
 for all $x\in B_{i(x_0)/4}$. We deduce  the following bound of the local Lipschitz constant at $x_0$
 
 $$\text{Lip } u(x_0)= \lim_{r\to 0^+} \sup_{y\in B_r\setminus\{x_0\}} \frac{ |u(x)-u(x_0)|} {d(x,x_0)}\le
 \frac{ 4 \, \| u \|_{L^\infty(B_{i(x_0)}(x_0))}}   {i(x_0)}.$$
 
By compactness we have $\kappa(x_0)=\inf\{ i(y)\colon y\in B_{i(x_0)}(x_0)\} >0$. Thus, for all $y\in  B_{i(x_0)}(x_0)$ we obtain
 $$\text{Lip } u(y) \le   \frac{ 4 \, \| u \|_{L^\infty(B_{2\, i(0)}(x_0))}}   {\kappa(x_0)}.$$ Therefore, we obtain
 $$\text{ess}\!\!\!\!\!\!\!\!\sup_{y\in B_{i(x_0)}(x_0)}  \text{Lip } u(y)\le  \frac{ 4 \, \| u \|_{L^\infty(B_{2\, i(0)}(x_0))}}   {\kappa(x_0)}.$$
 From Theorem 4.7 in \cite{DMV13} we deduce that
 $$|D_\mathfrak{X}u(y) |_g\le  \frac{ 4 \, \| u \|_{L^\infty(B_{2\, i(0)}(x_0))}}   {\kappa(x_0)}$$ for a.e. $y$, from which it follows that
we can take
$$L=  \frac{ 4 \, \| u \|_{L^\infty(B_{2\, i(0)}(x_0))}}   {\kappa(x_0)}.$$
\normalcolor
 \section{H\"older Estimate: Proof of Theorem \ref{apriori1}} 
 For $\alpha\in (0,1)$, positive constants $L$ and $A$ to be determined later, and $z\in B_{i(x_0)/4}$ consider the penalization function
$$G(x,y)=L \, d^\alpha (x,y)+ A \, d^2(x,z).$$
Suppose that
\begin{equation}\label{claim}
u(\hat{x})- u( \hat{y})- G(\hat{x}, \hat{y}) =\sup\left\{u(x)-u(y)-G(x,y)\colon (x,y)\in \overline{B_{i(x_0)}}\times \overline{B_{i(x_0)}}
\right\}=\theta>0.
\end{equation}
We will show that \eqref{claim} leads to a contradiction for specific choices of $L$ and $A$ depending only on $\|u \|_{L^\infty(B_{i(x_0)})}$, 
 $\| f \|_{L^{]\infty}(B_{i(x_0)})}$, and $C(\overline{B_{2\,i(x_0)}},g)$ when $\alpha\in(0,1)$. 
When \eqref{claim} does not hold we have
$$u(x)-u(y)\le L \, d^\alpha (x,y)+ A \, d^2(x,z), \text{  for  } x,y\in B_{i(x_0)}.$$ Letting $x=z$ we get the theorem. \par

Let us now  assume that \eqref{claim} holds. Since $G(x,y)\ge 0$ we must have $\hat{x}\not=\hat{y}$. 
In what follows we temporarily omit the center $x_0$ of the balls under consideration. 
\begin{claim} For $A\ge 8 \frac{\|u \|_{L^\infty(B_{i(x_0)})}}{i(x_0)^2}$ we have $\hat{x}\in B_{(3/4)i(x_0)}$.

\end{claim}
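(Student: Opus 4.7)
The plan is to use the fact that the penalization $G(x,y)$ dominates $A d^2(\hat x, z)$ together with the non-negativity of the supremum, and then close the argument by a triangle inequality against $z$.

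First I would note that the diagonal choice $(x,y)=(z,z)$ yields $u(z)-u(z)-G(z,z)=0$, so the supremum in \eqref{claim} is automatically $\geq 0$. Evaluating at the maximizer gives
\begin{equation*}
A \, d^2(\hat x, z) \;\leq\; u(\hat x) - u(\hat y) - L\, d^\alpha(\hat x,\hat y) \;\leq\; u(\hat x)-u(\hat y) \;\leq\; 2\,\|u\|_{L^\infty(B_{i(x_0)}(x_0))},
\end{equation*}
where we discarded the non-negative term $L\,d^\alpha(\hat x,\hat y)$.

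Next, with the hypothesis $A\geq 8\,\|u\|_{L^\infty(B_{i(x_0)}(x_0))}/i(x_0)^2$, this forces
\begin{equation*}
d^2(\hat x,z) \;\leq\; \frac{2\,\|u\|_{L^\infty(B_{i(x_0)}(x_0))}}{A} \;\leq\; \frac{i(x_0)^2}{4},
\end{equation*}
so $d(\hat x,z)\leq i(x_0)/2$. Since $z\in B_{i(x_0)/4}(x_0)$, the triangle inequality for the Riemannian distance gives
\begin{equation*}
d(\hat x, x_0) \;\leq\; d(\hat x, z) + d(z,x_0) \;\leq\; \tfrac{i(x_0)}{2} + \tfrac{i(x_0)}{4} \;=\; \tfrac{3\, i(x_0)}{4},
\end{equation*}
which is exactly the desired inclusion $\hat x\in B_{(3/4)i(x_0)}(x_0)$.

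There is no real obstacle here; the claim is a routine ``localization'' step whose only content is that the quadratic penalty $A\,d^2(\cdot,z)$ in $G$ overpowers the oscillation of $u$ once $A$ is chosen large enough in terms of $\|u\|_{L^\infty}$ and $i(x_0)$. The role of the hypothesis $z\in B_{i(x_0)/4}(x_0)$ is precisely to absorb the extra quarter-radius in the final triangle inequality.
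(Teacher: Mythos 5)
Your proof is correct and is simply the contrapositive of the paper's contradiction argument: both isolate the $A\,d^2(\hat x,z)$ term, bound it by $2\,\|u\|_{L^\infty(B_{i(x_0)}(x_0))}$, and then use the triangle inequality with $z\in B_{i(x_0)/4}(x_0)$ to localize $\hat x$. One cosmetic remark: your chain uses non-strict inequalities throughout and therefore lands in the closed ball; invoking $\theta>0$ makes the first inequality strict and yields the open ball $B_{(3/4)i(x_0)}(x_0)$ as stated.
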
 Suppose  $\hat{x}\notin B_{(3/4)i(x_0)}$, then $d(\hat{x}, z)\ge (1/2)i(x_0)$ so that we get
$$0<\theta= u(\hat{x})-u(\hat{y})- L \, d^\alpha (\hat{x},\hat{y})- A \, d^2(\hat{x},z), $$ and
$$0\le 2 \|u \|_{L^\infty(B_1)}-L \, d^\alpha (\hat{x},\hat{y})-\frac{A}{4}.$$  This implies
$A <  8 \frac{\|u \|_{L^\infty(B_{i(x_0)})}}{i(x_0)^2}$. \par
\textbf{From now on we take $A= 8 \frac{\|u \|_{L^\infty(B_{i(x_0)})}}{i(x_0)^2}$. }
\begin{claim}
For $L \ge 16 \|u \|_{L^\infty(B_{i(x_0)})}$ we have $\hat{y}\in B_{(7/8)i(x_0)}$.
\end{claim}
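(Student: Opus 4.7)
The plan is to argue by contradiction in complete parallel with the preceding claim, and the argument is essentially a one-line distance estimate combined with the optimality inequality at $(\hat x, \hat y)$. I would begin by assuming the opposite, namely that $\hat y \notin B_{(7/8)i(x_0)}(x_0)$, so $d(\hat y, x_0) \ge (7/8)\,i(x_0)$. The earlier claim, already proved, places $\hat x \in B_{(3/4)i(x_0)}(x_0)$, so $d(\hat x, x_0) < (3/4)\,i(x_0)$. The reverse triangle inequality for the Riemannian distance $d$ then immediately produces the geometric lower bound
\[
d(\hat x, \hat y) \;\ge\; d(\hat y, x_0) - d(\hat x, x_0) \;\ge\; \tfrac{7}{8}\,i(x_0) - \tfrac{3}{4}\,i(x_0) \;=\; \tfrac{1}{8}\,i(x_0).
\]

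Next I would plug this into the strict positivity $\theta>0$ at the maximizer. Writing out
\[
0 \;<\; \theta \;=\; u(\hat x) - u(\hat y) - L\,d^\alpha(\hat x,\hat y) - A\,d^2(\hat x, z),
\]
I bound $u(\hat x) - u(\hat y) \le 2\|u\|_{L^\infty(B_{i(x_0)})}$ and discard the nonnegative term $A\,d^2(\hat x, z)\ge 0$, which yields $L\,d^\alpha(\hat x,\hat y) < 2\|u\|_{L^\infty(B_{i(x_0)})}$. Substituting the lower bound on $d(\hat x,\hat y)$ gives
\[
L\,\bigl(\tfrac{1}{8}\,i(x_0)\bigr)^{\alpha} \;<\; 2\,\|u\|_{L^\infty(B_{i(x_0)})},
\]
and with the same normalization convention used in the previous claim (absorbing the $i(x_0)^\alpha$ factor and the bound $8^\alpha \le 8$ into the constant) this contradicts the hypothesis $L \ge 16\,\|u\|_{L^\infty(B_{i(x_0)})}$.

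I do not expect any conceptual obstacle here: this claim uses no information about the equation or the vector fields $\mathfrak X$ and does not invoke the theorem of sums or the second-derivative estimates of Section 3. It is a purely metric localization step, exactly analogous to the $\hat x$-localization claim just proved. The only item requiring care is bookkeeping of the $\alpha$-dependent factor $8^\alpha/i(x_0)^\alpha$ in the constant 16; but since $\alpha \in (0,1)$ and the constants in the statement already depend on $i(x_0)$, this is innocuous.
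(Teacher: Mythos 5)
Your proposal is correct and follows essentially the same contradiction argument as the paper: lower-bound $d(\hat x,\hat y)$ by $(1/8)i(x_0)$ via the earlier localization of $\hat x$, then use $L\,d^\alpha(\hat x,\hat y)<2\|u\|_{L^\infty}$ from the positivity of $\theta$, and conclude $L<2\cdot 8^\alpha\,\|u\|_{L^\infty}/i(x_0)^\alpha<16\,\|u\|_{L^\infty}/i(x_0)^\alpha$. Your side remark about the $i(x_0)^\alpha$ factor is apt: the stated hypothesis in the claim omits it, but the paper's own proof and the subsequent choice $L_0=16\,\|u\|_{L^\infty}/i(x_0)^\alpha$ make clear that the correct threshold carries the $i(x_0)^\alpha$ denominator, exactly as you observed.
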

 If $\hat{y}\notin B_{(7/8)i(x_0)}$, we have $d(\hat{y}, x_0)\ge (7/8)i(x_0)$ so that $d(\hat{x}, \hat{y})\ge (1/8)i(x_0)$.  From  the inequality
 $$0<\theta= u(\hat{x})-u(\hat{y})- L \, d^\alpha (\hat{x},\hat{y})- A \, d^2(\hat{x},z) $$  we obtain 
 $$0< 2 \|u \|_{L^\infty(B_{i(x_0)})}- L \, d^\alpha (\hat{x},\hat{y}). $$  This implies
 $$L < 
 \frac{2 \|u \|_{L^\infty(B_{i(x_0)})}}{d^{\alpha} (\hat{x},\hat{y})} 
  < \frac{2 \|u \|_{L^\infty(B_{i(x_0)})}}{((1/8)i(x_0))^{\alpha}}
  = 2\, 8^\alpha 
  \frac{ \|u \|_{L^\infty(B_{i(x_0)})}}{i(x_0)^\alpha}
  < 16   \frac{ \|u \|_{L^\infty(B_{i(x_0)})}}{i(x_0)^\alpha}.$$
  
  \par
 \textbf{From now on we take $L\ge L_0= 16 \frac{ \|u \|_{L^\infty(B_{i(x_0)})}}{i(x_0)^\alpha}$. }\par
Therefore we can assume  that $u(x)-u(y)-G(x,y)$ has an interior positive maximum at the point $(\hat{x}, \hat{y})$ for our choices of $A$ and $L\ge L_0$.
Note that we always have 
$$ L \, d^\alpha (\hat{x},\hat{y})+ A \, d^2(\hat{x},z) \le 2 \, \|u \|_{L^\infty(B_{i(x_0)})},$$
and that the point $(\hat{x}, \hat{y})$ where the maximum is achieved depends on $L,A, \alpha, z$ and $u$. The function $u$ and the values of  $A$, $\alpha$ and $z$ will remain fixed in the our arguments below. We will eventually let $L\to\infty$. From now on we will denote the point of maximum
$$(x_L, y_L),$$
where of course the subindex $L$  denotes the dependence on $L$. In particular, we have
\begin{equation}\label{zeroorder}
L \, d^\alpha (x_L,y_L)  \le 2 \, \|u \|_{L^\infty(B_{i(x_0)})},
\end{equation} so that we have
\begin{equation}\label{referee}
\lim_{L\to\infty} d(x_L,y_L) =0.\end{equation}
By selecting a sequence $L_m\to\infty$ we conclude the existence of a point $x^*\in B_{(3/4)i(x_0)}$ such that 
$$x^*=\lim_{m\to\infty} x_{L_m}= \lim_{m\to\infty}y_{L_m}. $$ We will omit the subindex $m$ and write just 
$L$ for $L_m$. Note that we may assume that $x_L$ and $y_L$ are in the ball $B_{i(x^*)/4}(x^*)$ for $L$ large enough. \par

Consider next the flow exponentials $s\mapsto\Theta_{x_L}(s)$ and $t\mapsto\Theta_{y_L}(t)$ defined in a neighborhood of zero. The function
$u(x)-u(y)-G(x,y)$ has a positive local maximum at $(x_L, y_L)$ if and only if the function
$$u(\Theta_{x_L}(s))-u(\Theta_{y_L}(t))-G(\Theta_{x_L}(s), \Theta_{y_L}(t))$$ has a positive local maximum at $(0,0)$.
\par

From the equivalence \eqref{ofjets} we note the $\mathfrak{Z}$ second order sub and superjets of the function $L \,d^\alpha(x_L, y_L)$ at 
the point $(x_L, y_L)$ are the same as the Euclidean second order sub and superjets of the function
$$\phi(s,t)=  L \, d^\alpha (\Theta_{x_L}(s),\Theta_{y_L}(t))$$ so that
$G(\Theta_{x_L}(s), \Theta_{y_L}(t))= \phi(s,t)+ A \, d^2(\Theta_{x_L}(s),z)$.
\par
Next we write $w(x)= u(x)- A\, d^2(x,z)$ so that 
\begin{equation*}
\begin{array}{rcl}
     u(x)-u(y)-G(x,y)  &=& u(x)-A\, d^2(x,z) -u(y) - L\, d^\alpha(x,y)   \\
& =& w(x)-u(y)-d^\alpha(x,y).
\end{array}
\end{equation*}
We are now ready to apply the Theorem of Sums \ref{sums} to the difference
$$w(\Theta_{x_L}(s))-u(\Theta_{y_L}(t))-\phi(s,t)$$ at the point $(0,0)$.
For each $\mu>0$, there exists symmetric $n \times n$ matrices $X_\mu$ and $Y_\mu$ so that
$$(D_s\phi(0,0), X_\mu)\in  \overline{J}^{2, +}(w\circ \Theta_{x_L}, 0), \hskip .3in(-D_t \phi(0,0), Y_\mu)\in  \overline{J}^{2, -}(u\circ \Theta_{y_L}, 0), $$
 and we have the estimate
\begin{equation}\label{sums2}
-(\mu + \| D^2\phi(0, 0)\|) \left(\begin{array}{cc}
I & 0      \\
0 & -I
\end{array}\right)\le
\left(
\begin{array}{cc}
X_\mu & 0      \\
0 & -Y_\mu
\end{array}
\right)
\le D^2\phi(0,0) +\frac{1}{\mu} (D^2\phi(0, 0))^2.
\end{equation}
Using the equivalence \eqref{ofjets} we translate back to the frame sub and super jets and set:
\begin{equation}\label{transfer}
\begin{array}{rcccl}
\xi_L&=&D_s\phi(0,0)& = & L\, D_\mathfrak{X}^x d^\alpha(x_L, y_L),\\
\eta_L &=& D_t\phi(0,0)& = & L\, D_\mathfrak{X}^y d^\alpha(x_L, y_L),\\
&&(\xi_l, X_\mu)& \in &   \overline{J}^{2, +}(w, x_L),\\
&&(\eta_L, Y_\mu)& \in &   \overline{J}^{2, -}(u, y_L).
\end{array}
\end{equation}
The  second order Taylor expansion of $\phi(s,t)$ at the point $(0,0)$ using the equivalence \eqref{ofjets} can be written as
$$\phi(s,t)= \langle (\xi_L,\eta_L),(s,t)\rangle+
\frac{1}{2}\langle   L\,  D^{2}_{\mathfrak{Z}}d^\alpha(x_L, y_L)\cdot (s,t)      ,  (s,t)    \rangle+ o(|s|^2+|t|^2)\text{ as } s,t\to 0 ,$$ from which it follows that
\begin{equation}\label{secondderivative}
D^2\phi(0,0)= L\,  (D^{2}_{\mathfrak{Z}}d^\alpha(x_L, y_L))^*=M_L.
\end{equation}
Note that the matrix $D^{2}_{\mathfrak{Z}}d^\alpha(x_L, y_L)$ is  not symmetric in general, so we must symmetrize it.
We can rewrite the third line  in \eqref{transfer} as
\begin{equation}\label{transfer2}
 (\xi_L+ A\, D^x_\mathfrak{X} d^2(x_L,z), \,\,X_\mu+ A\, (D^{2,x}_{\mathfrak{X}}d^2(x_L, z))^*\in   \overline{J}^{2, +}(u, x_L)
 \end{equation}
and rewriting the inequalities \eqref{sums2} as
\begin{equation}\label{sums3}
-(\mu + \|M_L\| )\left(\begin{array}{cc}
I & 0      \\
0 & -I
\end{array}\right)  \le 
\left(
\begin{array}{cc}
X_\mu & 0      \\
0 & -Y_\mu
\end{array}
\right)
\le M_L +\frac{1}{\mu} (M_L^2).
\end{equation} 

Using the fact that $u$ is viscosity subsolution of \eqref{mainequation} and \eqref{transfer2} to get
\begin{equation}\label{subcondition}
\begin{array}{rl}
f(x_L,& \!\!\!\!\!u(x_L), \xi_L + A\, D^x_\mathfrak{X} d^2(x_L,z))    \\ & \\
\le &  \langle \left[X_\mu+ A\, (D^{2,x}_{\mathfrak{X}}d^2(x_L, z))^*\right]\cdot 
\left(\xi_L+ A\, D^x_\mathfrak{X} d^2(x_L,z)\right), 
 \left(\xi_L+ A\, D^x_\mathfrak{X} d^2(x_L,z)\right)\rangle_g.
 \end{array} 
\end{equation}
Using  the fact that $u$ is viscosity supersolution of \eqref{mainequation} and the fourth statement in \eqref{transfer} we obtain
\begin{equation}\label{supercondition}
f(y_L, u(y_L), \eta_L) \ge  \langle Y_\mu\cdot 
\eta_L ,  \eta_L\rangle_g.
\end{equation}
Adding these estimates we get
\begin{equation}\label{main10}
\begin{array}{rl}
f(x_L, &\!\!\!\!\!u(x_L), \xi_L + A\, D^x_\mathfrak{X} d^2(x_L,z))-f(y_L, u(y_L), \eta_L)  \\ & \\
&\le  \langle \left[X_\mu+ A\, (D^{2,x}_{\mathfrak{X}}d^2(x_L, z))^*\right]\cdot 
\left(\xi_L+ A\, D^x_\mathfrak{X} d^2(x_L,z)\right), 
 \left(\xi_L+ A\, D^x_\mathfrak{X} d^2(x_L,z)\right)\rangle_g \\   & \\
 & -  \langle Y_\mu\cdot 
\eta_L , L\, \eta_L\rangle.\end{array}
\end{equation}
Expanding the right hand side of \eqref{main10}  we obtain  
\begin{equation}\label{main11}
\begin{array}{rcl}
&  &\langle X_\mu\cdot \xi_L, \xi_L\rangle_g- \langle   Y_\mu\cdot \eta_L, \eta_L\rangle_g\\
&& +2\,A\, \langle X_\mu \cdot \xi_L, D^x_\mathfrak{X} d^2(x_L,z)\rangle_g\\
&& +A^2 \langle X_\mu\cdot D^x_\mathfrak{X} d^2(x_L,z),D^x_\mathfrak{X} d^2(x_L,z)\rangle_g\\
&& +A\, \langle  (D^{2,x}_{\mathfrak{X}}d^2(x_L, z))^*\cdot
 \xi_L, \xi_L\rangle_g \\
 && +2 A^2 \,\langle  (D^{2,x}_{\mathfrak{X}}d^2(x_L, z))^*\cdot
 \xi_L, D^x_\mathfrak{X} d^2(x_L,z)\rangle_g  \\
 &&+A^3 \langle  (D^{2,x}_{\mathfrak{X}}d^2(x_L, z))^*\cdot
 D^x_\mathfrak{X} d^2(x_L,z), D^x_\mathfrak{X} d^2(x_L,z)\rangle_g \\
 &=& T_1+T_2+T_3+T_4+T_5+T_6.
 \end{array}
\end{equation}

 \begin{claim}\label{t1} Estimate of $T_1$:
 $$\begin{array}{rcl}
 T_1& \le&   4\alpha^3(\alpha-1)\, L^3 d^{3\alpha-4}+ \frac{L^4}{\mu}\left(
 c_0\,\alpha^4  d^{4\alpha -4}+8  \alpha^4 (\alpha-1)^2 d^{2\alpha-6} \right)\\
 &=& 4(\alpha-1)\alpha^3 L^3 d^{3\alpha-4}\left(1+\frac{2\,L\, \alpha (\alpha-1) d^{\alpha-2}}{\mu}+
\frac{c_0\, \alpha\, L\, d^\alpha}{\mu \, 4(\alpha-1)}
 \right)
 \end{array}$$
 \end{claim}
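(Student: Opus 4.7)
My plan is to apply the matrix inequality \eqref{sums3} from the Theorem of Sums to the distinguished $2n$-dimensional block vector $v=(\xi_L,\eta_L)$ and then collapse the resulting quadratic forms using the identities developed in Section \ref{three}.

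First I will observe that, by the definitions in \eqref{transfer},
\begin{equation*}
v=(\xi_L,\eta_L)=L\,D_{\mathfrak{Z}}d^\alpha(x_L,y_L),
\end{equation*}
and that the quantity $T_1=\langle X_\mu\xi_L,\xi_L\rangle_g-\langle Y_\mu\eta_L,\eta_L\rangle_g$ is precisely the value of the block operator $\mathrm{diag}(X_\mu,-Y_\mu)$ on $v$ in the $g$-inner product. Contracting both sides of \eqref{sums3} against $v$ on both slots and substituting $M_L=L\,(D_{\mathfrak{Z}}^2 d^\alpha)^*$ from \eqref{secondderivative}, the factor of $L^2$ coming from $\|v\|^2$ combines with one and two powers of $L$ coming from $M_L$ and $M_L^2$ respectively to give
\begin{equation*}
T_1\le L^3\bigl\langle (D_{\mathfrak{Z}}^2 d^\alpha)^* D_{\mathfrak{Z}}d^\alpha,D_{\mathfrak{Z}}d^\alpha\bigr\rangle_g+\frac{L^4}{\mu}\bigl\langle\bigl((D_{\mathfrak{Z}}^2 d^\alpha)^*\bigr)^2 D_{\mathfrak{Z}}d^\alpha,D_{\mathfrak{Z}}d^\alpha\bigr\rangle_g.
\end{equation*}

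Next I will evaluate each of the two inner products. The first is computed exactly by Lemma \ref{dalpha}, yielding the leading (negative) quantity $4\alpha^3(\alpha-1)L^3 d^{3\alpha-4}$. The second is controlled by the upper bound supplied by Lemma \ref{symmetrizedboundalpha}, producing the error contribution $\frac{L^4}{\mu}\bigl(c_0\alpha^4 d^{4\alpha-4}+8\alpha^4(\alpha-1)^2 d^{4\alpha-6}\bigr)$. Summing the two contributions delivers the first inequality of the claim.

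The final step will be the purely algebraic factorization: extracting $4\alpha^3(\alpha-1)L^3 d^{3\alpha-4}$ as a common factor and simplifying the ratios $\frac{c_0\alpha^4 d^{4\alpha-4}}{4\alpha^3(\alpha-1) d^{3\alpha-4}}=\frac{c_0\alpha d^\alpha}{4(\alpha-1)}$ and $\frac{8\alpha^4(\alpha-1)^2 d^{4\alpha-6}}{4\alpha^3(\alpha-1) d^{3\alpha-4}}=2\alpha(\alpha-1)d^{\alpha-2}$, which reproduces the displayed bracket. I do not anticipate a real obstacle here; the only subtlety is bookkeeping of the sign of $\alpha-1<0$, which alternates depending on whether a given summand carries one or two factors of $(\alpha-1)$, and of course the fact that the factorization is an \emph{equality} rewriting of the already-proven inequality rather than a new estimate.
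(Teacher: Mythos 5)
Your proof is correct and follows essentially the same route as the paper: contract the matrix inequality \eqref{sums3} against the block vector $v=(\xi_L,\eta_L)=L\,D_{\mathfrak{Z}}d^\alpha$, substitute $M_L=L(D^2_{\mathfrak{Z}}d^\alpha)^*$, evaluate the linear term exactly via Lemma \ref{dalpha} and bound the quadratic term via Lemma \ref{symmetrizedboundalpha}, then factor out $4\alpha^3(\alpha-1)L^3 d^{3\alpha-4}$. One small remark worth making explicit: in the second error term you write $d^{4\alpha-6}$ where the paper's statement of both the Claim and Lemma \ref{symmetrizedboundalpha} prints $d^{2\alpha-6}$; your exponent is the correct one (it follows directly from multiplying the bound for $\langle((D_{\mathfrak{Z}}^2 d^\alpha)^*)^2 D_{\mathfrak{Z}}d,D_{\mathfrak{Z}}d\rangle_g$ by $\alpha^2 d^{2\alpha-2}$), and indeed only with $d^{4\alpha-6}$ does the algebraic factorization reproduce the bracket $1+\frac{2L\alpha(\alpha-1)d^{\alpha-2}}{\mu}+\frac{c_0\alpha L d^\alpha}{4\mu(\alpha-1)}$ as displayed in the second line of the claim.
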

\begin{proof}
From the upper bound in \eqref{sums3} we get
$$\langle X_\mu\cdot \xi_L, \xi_L\rangle_g- \langle   Y_\mu\cdot \eta_L, \eta_L\rangle_g  \le
\langle \left( M_L +\frac{1}{\mu} (M_L^2)\right) \cdot  \left(
\begin{array}{c}
\xi_L    \\
\eta_L
\end{array}
\right)   ,    \left(
\begin{array}{c}
\xi_L  \\
\eta_L
\end{array}
\right)    \rangle_g
$$
Recall that $
M_L = L(D^{2}_{\mathfrak{Z}}d^\alpha(x_L, y_L))^*$. We need to estimate
$$\langle M_L \cdot  \left(
\begin{array}{c}
\xi_L    \\
\eta_L
\end{array}
\right)   ,    \left(
\begin{array}{c}
\xi_L  \\
\eta_L
\end{array}
\right)    \rangle_g \text{      and       } \langle M_L^2 \cdot  \left(
\begin{array}{c}
\xi_L    \\
\eta_L
\end{array}
\right)   ,    \left(
\begin{array}{c}
\xi_L  \\
\eta_L
\end{array}
\right)    \rangle_g.
$$
Using Lemma \ref{dalpha}, we get 
$$
\begin{array}{rcl}
\langle M_L \cdot  \left(
\begin{array}{c}
\xi_L    \\
\eta_L
\end{array}
\right)   ,    \left(
\begin{array}{c}
\xi_L  \\
\eta_L
\end{array}
\right)    \rangle_g & = & L^3\langle  (D^{2}_{\mathfrak{Z}}d^\alpha(x_L, y_L))^* \cdot  \left(
\begin{array}{c}
 D_\mathfrak{X}^x d^\alpha(x_L, y_L)  \\
 D_\mathfrak{X}^y d^\alpha(x_L, y_L)
\end{array}
\right)   ,    \left(
\begin{array}{c}
D_\mathfrak{X}^x d^\alpha(x_L, y_L) \\
 D_\mathfrak{X}^y d^\alpha(x_L, y_L)
\end{array}
\right)    \rangle_g\\
&=& L^3\, 4\alpha^3(\alpha-1) d(x_L,y_L)^{3\alpha-4}, 
\end{array}
$$
and invoking Lemma \ref{symmetrizedboundalpha} we get
$$
\langle M_L^2 \cdot  \left(
\begin{array}{c}
\xi_L    \\
\eta_L
\end{array}
\right)   ,    \left(
\begin{array}{c}
\xi_L  \\
\eta_L
\end{array}
\right)    \rangle_g  \le  L^4\left( c_0\,\alpha^4  d^{4\alpha -4}+8  \alpha^4 (\alpha-1)^2 d^{2\alpha-6}\right).
$$

\end{proof}
For a fixed $\beta\in\mathbb{R}$ to be determined below set  $\mu=\beta  \left(2\, L \, \alpha (\alpha-1) d^{\alpha-2}\right)$,  so that we have

$$
\begin{array}{rcl}
1+\frac{2\alpha (\alpha-1) \, L \,d^{\alpha-2}}{\mu}+
 \frac{c_0\, \alpha\,L\, d^\alpha}{\mu \, 4(\alpha-1)}& =&1+\frac{1}{\beta}+
  \frac{c_0\, \alpha\,  d^\alpha}{\beta \left(2\alpha (\alpha-1) d^{\alpha-2}\right) \, 4(\alpha-1)}\\
 &=& 1+\frac{1}{\beta}+
 \frac{c_0\, d^2}{8\, \beta (\alpha-1)^2 } \end{array}.
 $$

Since  $\beta<0 $ we have
$$1+\frac{1}{\beta}+\frac{c_0\, d^2}{8\, \beta (\alpha-1)^2 }\ge 1+\frac{1}{\beta}+
  \frac{c_0}{8\, \beta (\alpha-1)^2 }.
$$
We can now choose $\beta$ depending only on $c_0$ and $\alpha$ so that 
\begin{equation}\label{newkey}
1+\frac{2 \,L\,\alpha (\alpha-1) d^{\alpha-2}}{\mu}+
\frac{L\,c_0\, \alpha\, d^2}{\mu \, 4(\alpha-1)}\ge \frac{1}{2}.
\end{equation}
Our next task is to estimate the norm $\|X_\mu\|$ using Proposition \ref{boundford2} and \eqref{sums3}.
Using the pair of vectors $(v,0)\in \mathbb{R}^n\times\mathbb{R}^n$ in \eqref{sums3} we get
$$ -(\mu+ \|M_L\|) |v|_g^2\le \langle X_\mu\cdot v, v\rangle_g \le \left(\|M_L\|+\frac{\|M_L\|^2}{\mu}\right) |v|_g^2.$$
We estimate the norm of $M_L$ by using Proposition \ref{boundford2}
$$\begin{array}{rcl}
 \| M_L\| & \le  & L \|D^{2}_{\mathfrak{Z}}d^\alpha(x_L, y_L))^*\|\\
 &\le & L\|  \alpha \,d^{\alpha-1} (D_\mathfrak{Z}^2 d)^*+ \alpha (\alpha -1) d^{\alpha-2} (D_\mathfrak{Z} d
 \otimes D_\mathfrak{Z}d) \|\\
 &\le & L\,  \alpha \, d^{\alpha-1}\| (D_\mathfrak{Z}^2 d)^* \|+L\, \sqrt{2}\,\alpha\, |\alpha-1| d^{\alpha -2}\\
 &\le & L\,  \alpha \, d^{\alpha-1}\frac{c_1}{d}+ L\,\sqrt{2}\,\alpha \,|\alpha-1| d^{\alpha -2}\\
 & \le &  c_2 L\,  \alpha \, d^{\alpha-2} 
 \end{array}
 $$
for some constant $c_2\ge 1$. Note that we can choose $\beta$ sufficiently negative so that $c_2\le \beta (\alpha -1)$ we can guarantee that 
$\| M_L\|\le \mu/2$. 
For the upper bound  we compute
$$
\begin{array}{crl}
\langle X_\mu\cdot v, v\rangle_g  & \le  & \left( c_2 L\,  \alpha \, d^{\alpha-2} + \frac{(c_2 L\,  \alpha \, d^{\alpha-2} )^2}{\mu}\right) |v|_g^2\\
	&\le&  \left( \|M_L\|+\frac{\|M_L\|^2}{\mu} \right) |v|_g^2\\
	&\le & \frac34 \, \mu  |v|_g^2.
\end{array}$$
For the lower bound $$\begin{array}{rcl}
\langle X_\mu\cdot v, v\rangle_g  & \ge & -(\mu+ \|M_L\|) \, |v|_g^2\\
&\ge & -\frac{3}{2} \mu\,  |v|_g^2.
\end{array}
$$ Combining both estimates we get
\begin{equation}\label{normofxmu}
\| X_\mu\| \le \frac34 \, \mu\le \frac34 \, \beta  \left(2\, L \, \alpha (\alpha-1) d^{\alpha-2}\right)\le c_4 \, L \, \alpha \, d^{\alpha-2}.
\end{equation}
\begin{claim}\label{t2} Estimate of $T_2$:
 $$ 
 T_2 \le c_5 \, \alpha^2\, L^2\, d^{2\alpha -3}.
 $$
 \end{claim}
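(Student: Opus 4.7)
The estimate of $T_2$ is a direct Cauchy--Schwarz argument once we collect the three pieces that make it up: the operator norm of $X_\mu$, the length of $\xi_L$, and the length of $D^x_\mathfrak{X} d^2(x_L,z)$.

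First, recalling that $\xi_L = L\,D^x_\mathfrak{X} d^\alpha(x_L,y_L) = L\alpha\, d^{\alpha-1}\, D^x_\mathfrak{X} d(x_L,y_L)$ and using the eikonal equation \eqref{eikonal1} I get $|\xi_L|_g = L\alpha\, d^{\alpha-1}$, where $d=d(x_L,y_L)$. Second, for the other vector, the function $(x,y)\mapsto d^2(x,y)$ is smooth on the relevant compact set by Proposition \ref{dproperties2}, hence $|D^x_\mathfrak{X} d^2(x_L,z)|_g$ is bounded by a constant depending only on the compact set $\overline{B_{2\,i(x_0)}(x_0)}$ and the frame $\mathfrak{X}$ (in fact $|D^x_\mathfrak{X} d^2(x_L,z)|_g = 2\,d(x_L,z)\,|D^x_\mathfrak{X} d(x_L,z)|_g \le 2\,i(x_0)$ by the eikonal identity). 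Third, the bound \eqref{normofxmu} gives $\|X_\mu\|\le c_4\, L\,\alpha\, d^{\alpha-2}$.

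Combining these via Cauchy--Schwarz,
\[
|T_2| \;\le\; 2A\,\|X_\mu\|\,|\xi_L|_g\,|D^x_\mathfrak{X} d^2(x_L,z)|_g \;\le\; 2A\,\bigl(c_4\,L\,\alpha\,d^{\alpha-2}\bigr)\bigl(L\,\alpha\,d^{\alpha-1}\bigr)\bigl(2\,i(x_0)\bigr),
\]
which collapses to the desired bound $T_2 \le c_5\,\alpha^2\,L^2\,d^{2\alpha-3}$, with $c_5$ depending only on $A$, $i(x_0)$, $c_4$ and the frame $\mathfrak{X}$ (all of which are fixed in the argument). Since $A$ itself has already been fixed in terms of $\|u\|_{L^\infty(B_{i(x_0)})}$ and $i(x_0)$, the constant $c_5$ ultimately depends only on the data already appearing in the statement of Theorem \ref{apriori1}.

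There is no real obstacle here; the only subtlety is ensuring that the matrix $X_\mu$ is controlled by the norm estimate \eqref{normofxmu}, which has already been established, and that $d(x_L,y_L)>0$, which is guaranteed since $\hat{x}\ne\hat{y}$ in \eqref{claim}. The same template (Cauchy--Schwarz, eikonal identity for the gradients of $d^\alpha$, smooth-bound for gradients/Hessians of $d^2$, and the norm bound \eqref{normofxmu} on $X_\mu$) will handle $T_3,\dots,T_6$ as well, with the exponents of $L$ and $d$ shifting according to how many factors of $\xi_L$ are replaced by factors of $D^x_\mathfrak{X} d^2(x_L,z)$ and how many factors of $X_\mu$ by $(D^{2,x}_\mathfrak{X} d^2(x_L,z))^*$.
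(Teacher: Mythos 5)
Your proof is correct and is exactly what the paper intends: the paper states Claims 6.3--6.7 without supplying proofs, but it has just established the norm bound \eqref{normofxmu} for precisely this purpose, and the Cauchy--Schwarz argument you give (together with $|\xi_L|_g = L\alpha d^{\alpha-1}$ from the eikonal equation and the boundedness of $|D^x_\mathfrak{X} d^2(x_L,z)|_g$ on the compact set) yields the stated exponent $d^{2\alpha-3}$ and the $L^2\alpha^2$ factor directly. Your closing remark that the same template dispatches $T_3,\dots,T_6$ matches the paper's silent treatment of those claims.
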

 \begin{claim}\label{t3} Estimate of $T_3$:
 $$ 
 T_3 \le c_6 \, \alpha^2\, L\, d^{\alpha -2}.
 $$
 \end{claim}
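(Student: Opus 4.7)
The plan is to bound
$$T_3 = A^2 \langle X_\mu\cdot D^x_\mathfrak{X} d^2(x_L,z),\, D^x_\mathfrak{X} d^2(x_L,z)\rangle_g$$
by combining the operator-norm estimate for $X_\mu$ already recorded in \eqref{normofxmu} with the eikonal equation, which forces the vector $D^x_\mathfrak{X} d^2(\cdot,z)$ to be essentially bounded in the region under consideration. This makes $T_3$ structurally the simplest of the six terms: there is no cross term with $\xi_L$ and $\eta_L$, so both copies of the argument pick up only a factor of $d(x_L,z)$, which is controlled by $i(x_0)$.

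First I compute the length of $D^x_\mathfrak{X} d^2(x_L,z)$. Since $x_L \in B_{(3/4) i(x_0)}(x_0)$ and $z \in B_{i(x_0)/4}(x_0)$, the function $d^2(\cdot,z)$ is smooth at $x_L$, and the chain rule together with Proposition~\ref{dproperties1} applied to $x\mapsto d(x,z)$ gives
$$D^x_\mathfrak{X} d^2(x_L,z) = 2\,d(x_L,z)\,D^x_\mathfrak{X} d(x_L,z), \qquad |D^x_\mathfrak{X} d(x_L,z)|_g = 1.$$
Consequently $|D^x_\mathfrak{X} d^2(x_L,z)|_g = 2\,d(x_L,z) \le 2\,i(x_0)$ by the triangle inequality.

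Next, I use the symmetry of $X_\mu$ via the spectral inequality $\langle X_\mu v,v\rangle_g \le \|X_\mu\|\,|v|_g^2$ with $v = D^x_\mathfrak{X} d^2(x_L,z)$. By \eqref{normofxmu} we have $\|X_\mu\| \le c_4\, L\, \alpha\, d^{\alpha-2}$ where $d = d(x_L,y_L)$, and therefore
$$T_3 \;\le\; A^2\, \|X_\mu\|\, |D^x_\mathfrak{X} d^2(x_L,z)|_g^2 \;\le\; 4\,A^2\,i(x_0)^2\,c_4\,L\,\alpha\,d^{\alpha-2}.$$
Absorbing the numerical factors $4$, $A^2$, $i(x_0)^2$, and $c_4$ into a single constant $c_6$ depending only on $\|u\|_{L^\infty(B_{i(x_0)}(x_0))}$, the injectivity radius, and the frame data on $\overline{B_{2\,i(x_0)}(x_0)}$, and noting that $\alpha \le 1$ permits us to replace the linear factor $\alpha$ by $\alpha^2$ up to a further absorption of $1/\alpha_0$ into $c_6$ (since $\alpha$ will be chosen in a fixed compact subinterval of $(0,1)$), we arrive at the desired bound
$$T_3 \;\le\; c_6\,\alpha^2\,L\,d^{\alpha-2}.$$

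The argument has no serious obstacle once \eqref{normofxmu} is in hand; the only thing to be careful about is tracking how the absorbed constants depend on $\|u\|_{L^\infty}$, $i(x_0)$, and the compactness-dependent frame constants, so that $c_6$ ends up belonging to the class of admissible constants appearing in Theorem~\ref{apriori1}. The structural feature to emphasize is that the eikonal equation kills what would otherwise be the problematic growth of $D^x_\mathfrak{X} d^2$, reducing the whole estimate to a single application of the spectral bound on $X_\mu$.
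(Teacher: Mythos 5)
Your argument is correct, and since the paper states Claim~\ref{t3} without an explicit proof this is exactly the computation the authors have in mind: expand $D^x_\mathfrak{X}d^2(x_L,z)=2\,d(x_L,z)\,D^x_\mathfrak{X}d(x_L,z)$, use the eikonal equation of Proposition~\ref{dproperties1} to get $|D^x_\mathfrak{X}d^2(x_L,z)|_g=2\,d(x_L,z)\le 2\,i(x_0)$, and close with the spectral bound $\langle X_\mu v,v\rangle_g\le\|X_\mu\|\,|v|_g^2$ together with \eqref{normofxmu}. Two small remarks. First, you are right that the natural output of this computation is $T_3\le \tilde c\,\alpha\,L\,d^{\alpha-2}$ with only one power of $\alpha$; passing to $\alpha^2$ by absorbing $1/\alpha$ into $c_6$ requires $\alpha$ bounded away from $0$, which you correctly flag — and in fact it is harmless, since $c_6$ (like $L_1$ in Theorem~\ref{apriori1}) is allowed to depend on $\alpha$, and since the $T_3$ contribution to \eqref{main112} is $O(L^{-2}d^{-2\alpha+2})\to 0$ regardless of whether the prefactor is $\alpha$ or $\alpha^2$. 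Second, strictly speaking the two-sided estimate right before \eqref{normofxmu} gives $-\tfrac32\mu I\le X_\mu\le\tfrac34\mu I$, so the sharp operator-norm bound is $\|X_\mu\|\le\tfrac32\mu$ rather than $\tfrac34\mu$; this only changes $c_4$ and hence $c_6$ by a harmless numerical factor and does not affect your conclusion.
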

  \begin{claim}\label{t4} Estimate of $T_4$:
 $$ 
 T_4 \le c_7 \, \alpha^2\ L^2\,d^{2\alpha -2}.
 $$
 \end{claim}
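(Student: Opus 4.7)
The plan is to bound
$$T_4 = A\,\langle (D^{2,x}_{\mathfrak{X}}d^2(x_L, z))^*\cdot \xi_L, \xi_L\rangle_g$$
by exploiting the joint smoothness of $d^2$ in both variables (Proposition \ref{dproperties2}) together with the eikonal equation. In contrast to the estimate of $T_1$, no cancellation from \eqref{matrixeikonal} is needed; a direct operator-norm bound combined with an explicit computation of $|\xi_L|_g$ will suffice.

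First I would invoke the estimate stated immediately after Proposition \ref{dproperties2}: on the relevant compact set there is a constant $C_0$ such that the matrix of second $\mathfrak{X}$-derivatives of $v_z(x)=d^2(x,z)$ has operator norm at most $C_0$, uniformly in $z$ and $x_L$ inside $B_{i(x_0)/2}(x_0)$. This passes to the symmetrization $(D^{2,x}_{\mathfrak{X}}d^2(x_L,z))^*$, and the standard operator-norm bound yields $T_4 \le A\,C_0\,|\xi_L|_g^2$. Second, unwinding the definition $\xi_L = L\, D^x_\mathfrak{X} d^\alpha(x_L, y_L) = L\,\alpha\, d(x_L,y_L)^{\alpha-1}\, D^x_\mathfrak{X} d(x_L, y_L)$ and using the eikonal identity $|D^x_\mathfrak{X} d|_g = 1$ from \eqref{eikonal1}, I obtain $|\xi_L|_g^2 = L^2\,\alpha^2\, d(x_L,y_L)^{2\alpha-2}$. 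Combining these two ingredients and absorbing $A\cdot C_0$ into a single constant $c_7$ (recall that $A = 8\|u\|_{L^\infty(B_{i(x_0)})}/i(x_0)^2$ is fixed throughout the penalization argument), I arrive at the asserted bound
$$T_4 \le c_7\,\alpha^2\,L^2\,d^{2\alpha-2}.$$

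No real obstacle appears; this is the most elementary of the six terms, essentially because $d^2$ is smooth near the diagonal and the eikonal equation pins down the length of $D^x_\mathfrak{X} d$ exactly. The only item requiring a moment of care is checking that $c_7$ depends only on the data permitted by Theorem \ref{apriori1} — namely on $\|u\|_{L^\infty(B_{i(x_0)}(x_0))}$, $i(x_0)$, the metric $g$, and the compact set $\overline{B_{2\,i(x_0)}(x_0)}$. Since both $A$ and $C_0$ have exactly this dependence, the combined constant is admissible, and the claim is established.
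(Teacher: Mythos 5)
Your proof is correct and is the natural argument: the paper does not spell out a proof of this claim, but the bound follows exactly as you describe, by combining the uniform bound $|D^2_\mathfrak{X}(d^2(\cdot,z))|_g\le C_0(K)$ stated after Proposition \ref{dproperties2} with the explicit computation $|\xi_L|_g = L\alpha\, d(x_L,y_L)^{\alpha-1}$ coming from $\xi_L = L\,D_\mathfrak{X}^x d^\alpha$ and the eikonal equation \eqref{eikonal1}. The symmetrization step is harmless (it does not increase the operator norm), and the constant $c_7 = A\,C_0$ has exactly the admissible dependence on $\|u\|_{L^\infty}$, $i(x_0)$, and the compact set, as you observe.

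One small point worth noting: the bound after Proposition \ref{dproperties2} is stated for $x,y\in B_{i(x_0)/2}(x_0)$, while the proof only guarantees $x_L\in B_{(3/4)i(x_0)}(x_0)$ and $z\in B_{i(x_0)/4}(x_0)$. This is resolved because one may take $K=\overline{B_{2\,i(x_0)}(x_0)}$ and use the uniform lower bound on the injectivity radius over $K$ (the quantity $\kappa(x_0)$ introduced in the proof of Theorem \ref{apriori0}) to cover the pair $(x_L,z)$ by a ball of the required type; the paper itself is implicit about this same point, so your treatment is consistent with it.
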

   \begin{claim}\label{t5} Estimate of $T_5$:
 $$ 
 T_5 \le c_8 \, \alpha\, L \,d^{\alpha -1}.
 $$
 \end{claim}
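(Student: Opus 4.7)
The strategy is a direct Cauchy--Schwarz estimate, exploiting that $A$ and $z$ are fixed parameters (independent of $L$) and that the $d^2(\cdot, z)$ quantities are smooth and bounded on the compact set where $x_L$ lives, while the only $L$-dependent object is $\xi_L$, whose norm we already control from the eikonal equation.

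First I would isolate the three scalar factors in the quadratic form
\[
T_5 = 2A^2 \,\langle (D^{2,x}_{\mathfrak{X}}d^2(x_L, z))^*\cdot \xi_L, \, D^x_\mathfrak{X} d^2(x_L,z)\rangle_g
\]
and apply Cauchy--Schwarz in the Riemannian inner product to obtain
\[
|T_5| \le 2A^2 \,\bigl\|(D^{2,x}_{\mathfrak{X}}d^2(x_L, z))^*\bigr\| \cdot |\xi_L|_g \cdot |D^x_\mathfrak{X} d^2(x_L,z)|_g.
\]
Next I would bound each factor individually. For the Hessian term, Proposition \ref{dproperties2} (together with the remark immediately following it giving the constant $C_0(K)$) yields $\|(D^{2,x}_{\mathfrak{X}}d^2(x_L, z))^*\|\le C_0(\overline{B_{i(x_0)}(x_0)})$ since $x_L, z \in B_{(3/4)i(x_0)}(x_0)\subset B_{i(x_0)/2}(x_0)$. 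For the gradient of $d^2$, the chain rule combined with the eikonal equation \eqref{eikonal} gives $D^x_\mathfrak{X} d^2(x_L,z) = 2 d(x_L,z)\, D^x_\mathfrak{X} d(x_L,z)$, so $|D^x_\mathfrak{X} d^2(x_L,z)|_g = 2 d(x_L,z)\le 2\,i(x_0)$. For $\xi_L$, recall from \eqref{transfer} that $\xi_L = L\, D_\mathfrak{X}^x d^\alpha(x_L,y_L) = L\alpha\, d^{\alpha-1} D_\mathfrak{X}^x d(x_L,y_L)$, so $|\xi_L|_g = L\alpha\, d^{\alpha-1}(x_L,y_L)$ by the eikonal equation.

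Plugging these three bounds in gives
\[
|T_5| \le 2A^2 \cdot C_0 \cdot L\alpha\, d^{\alpha-1}(x_L,y_L) \cdot 2 i(x_0) = c_8 \,\alpha\, L\, d^{\alpha-1},
\]
where $c_8 = 4 A^2 C_0 i(x_0)$ depends only on $\|u\|_{L^\infty(B_{i(x_0)}(x_0))}$, $i(x_0)$, and the metric $g$ on $\overline{B_{2i(x_0)}(x_0)}$ through $C_0$. I do not expect any real obstacle here: unlike the estimate of $T_1$, there are no cancellations to exploit and no delicate matching of negative quantities---$T_5$ is a first-order-in-$L$ term, controlled entirely by the pointwise boundedness of the smooth function $d^2(\cdot,z)$ and its derivatives on the compact region of interest, together with the explicit formula for $|\xi_L|_g$.
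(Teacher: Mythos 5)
Your Cauchy--Schwarz approach is the natural one, and since the paper states Claims \ref{t2}--\ref{t6} without proof (they are treated as routine), there is no competing argument to compare against. Your decomposition is correct: $T_5$ is a trilinear quantity in which the Hessian of $d^2(\cdot,z)$ and the gradient of $d^2(\cdot,z)$ are both bounded on the compact region by Proposition \ref{dproperties2} and the eikonal equation, while the only term that grows with $L$ is $|\xi_L|_g = L\alpha\, d^{\alpha-1}(x_L,y_L)$ (via $\xi_L = L\alpha\, d^{\alpha-1}D^x_\mathfrak{X} d$ and $|D^x_\mathfrak{X} d|_g=1$). The resulting bound $|T_5|\le 4A^2 C_0\, i(x_0)\,\alpha\, L\, d^{\alpha-1}$ has the stated form with a constant of the allowed dependence.

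One small slip to fix: you write that $x_L, z \in B_{(3/4)i(x_0)}(x_0) \subset B_{i(x_0)/2}(x_0)$, but that inclusion runs the wrong way since $(3/4)i(x_0) > i(x_0)/2$. The remedy is cosmetic: the paper's remark after Proposition \ref{dproperties2} states that $C_0 = C_0(K)$ may be chosen for any compact $K$, with the bound holding for $x,y\in B_{i(x_0)/2}(x_0)$ and all $x_0\in K$; taking $K = \overline{B_{2i(x_0)}(x_0)}$ and covering $B_{(3/4)i(x_0)}(x_0)$ by finitely many half-injectivity-radius balls centred in $K$ gives a uniform $C_0$ on the region where $x_L$ and $z$ actually live. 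With that correction the argument is complete.
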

    \begin{claim}\label{t6} Estimate of $T_6$:
 $$ 
 T_6 \le c_9.
 $$
 \end{claim}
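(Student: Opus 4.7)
The plan is to observe that $T_6$ is the only one of the six terms that involves neither $X_\mu$, $Y_\mu$, nor $\xi_L$, $\eta_L$: it is a pure expression in the \emph{smooth} function $(x,y)\mapsto d^2(x,y)$ evaluated at $(x_L,z)$, multiplied by the constant $A^3$ that was fixed earlier as $A=8\|u\|_{L^\infty(B_{i(x_0)})}/i(x_0)^2$. Consequently there is no dependence on $L$ and no potential blow-up as $d(x_L,y_L)\to 0$, which is why this estimate is by far the easiest of the six.

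To make this precise, I would proceed in three short steps. First, apply the operator-norm/Cauchy--Schwarz inequality to write
\begin{equation*}
T_6 \;\le\; A^3\,\bigl\|(D^{2,x}_{\mathfrak{X}}d^2(x_L,z))^*\bigr\|_{op}\,\bigl|D^x_{\mathfrak{X}}d^2(x_L,z)\bigr|_g^{\,2}.
\end{equation*}
Second, control the Hessian factor using the bound obtained from Proposition \ref{dproperties2} and the displayed inequality immediately following it: taking $K=\overline{B_{i(x_0)}(x_0)}$, both $x_L$ and $z$ lie in $B_{i(x_0)/2}(x_0)\subset K$, so
\begin{equation*}
\bigl\|(D^{2,x}_{\mathfrak{X}}d^2(x_L,z))^*\bigr\|_{op} \;\le\; \bigl|D^2_{\mathfrak{X}} v_z(x_L)\bigr|_g \;\le\; C_0(K).
\end{equation*}
Third, control the gradient factor by the chain rule together with the eikonal equation \eqref{eikonal} of Proposition \ref{dproperties1}: since $D^x_{\mathfrak{X}}d^2(x_L,z)=2\,d(x_L,z)\,D^x_{\mathfrak{X}}d(x_L,z)$ and $|D^x_{\mathfrak{X}}d(x_L,z)|_g=1$, we obtain
\begin{equation*}
\bigl|D^x_{\mathfrak{X}}d^2(x_L,z)\bigr|_g \;=\; 2\,d(x_L,z) \;\le\; 2\,i(x_0),
\end{equation*}
where the last inequality uses that $x_L\in B_{(3/4)i(x_0)}(x_0)$ and $z\in B_{i(x_0)/4}(x_0)$.

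Combining these three bounds yields
\begin{equation*}
T_6 \;\le\; A^3 \, C_0(K) \, \bigl(2\,i(x_0)\bigr)^2 \;=:\; c_9,
\end{equation*}
a constant depending only on $\|u\|_{L^\infty(B_{i(x_0)}(x_0))}$, $i(x_0)$, and the metric $g$ on the compact set $\overline{B_{2\,i(x_0)}(x_0)}$, as required. There is no genuine obstacle here; the argument is essentially bookkeeping once one recognizes that $T_6$ decouples entirely from the doubling-variables machinery.
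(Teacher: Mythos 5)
Your approach is correct and is essentially the only natural one: $T_6$ is a pure expression in the smooth function $(x,y)\mapsto d^2(x,y)$ with the fixed constant $A^3$ in front, so bounding the Hessian factor by a constant and the gradient factor by $2\,d(x_L,z)$ via the eikonal equation immediately gives $T_6\le c_9$ with no $L$-dependence. The paper does not spell out the proofs of Claims 6.5--6.9, but the right-hand side of \eqref{main111}, where $T_6$ enters as the lone constant $c_9$, makes clear this is precisely the intended argument.

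One small imprecision worth fixing: you assert that ``both $x_L$ and $z$ lie in $B_{i(x_0)/2}(x_0)$,'' but the localization claim in the paper only places $x_L$ in $B_{(3/4)\,i(x_0)}(x_0)$, so the displayed bound $|D^2_{\mathfrak{X}}v_z(x_L)|_g\le C_0(K)$ cannot be invoked verbatim with the constant from the remark following Proposition \ref{dproperties2}. This does not affect the conclusion, because the point you actually need is softer: by Proposition \ref{dproperties2} the function $(x,y)\mapsto d^2(x,y)$ is smooth on $B_{i(x_0)}(x_0)\times B_{i(x_0)}(x_0)$, and $\overline{B_{(3/4)\,i(x_0)}(x_0)}\times\overline{B_{i(x_0)/4}(x_0)}$ is a compact subset of this domain of smoothness, so $\|(D^{2,x}_{\mathfrak{X}}d^2(x_L,z))^*\|$ is bounded by a constant depending only on $g$ and $\overline{B_{2\,i(x_0)}(x_0)}$ (which is the dependence advertised in Theorem \ref{apriori1}). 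With that adjustment, your chain of inequalities and the final constant $c_9 = 4\,A^3\,C\,i(x_0)^2$ (with $A = 8\|u\|_{L^\infty}/i(x_0)^2$ already fixed) is exactly right.
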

 Let us now estimate the left-hand side of \eqref{main10} using condition \eqref{gradproblem}. We  have as $L\to\infty$
  \begin{equation*}\label{lhs1}
 \begin{array}{rcl}
 | f(x_L, u(x_L),  \xi_L + A\, D^x_\mathfrak{X} d^2(x_L,z))|  &\le &  C_2 |  \xi_L + A\, D^x_\mathfrak{X} d^2(x_L,z))|_g^\beta + C_3\\
 &=& C_2 |  L\, D_\mathfrak{X}^x (x_L,y_L)d^\alpha+ A\, D^x_\mathfrak{X} d^2(x_L,z))|_g^\beta + C_3\\
 &\le & C_4| L \alpha d^{\alpha-1} D_\mathfrak{X}^x d(x_L, y_L)|^\beta + C_5 |d(x_L,z) D^x_\mathfrak{X} d(x_L,z))|^\beta+C_3\\
 &\le & C_6 L^\beta d^{\beta(\alpha-1)} + C_7 \\
  &\le & C_8  L^\beta d^{\beta(\alpha-1)}, 
 \end{array} 
 \end{equation*}
 and similarly for the term $| \langle Y_\mu\cdot 
\eta_L ,  \eta_L\rangle|$. 
 Combining these estimates we get
 \begin{equation}\label{main111}
\begin{array}{rcl}
-C_9  L^\beta d^{\beta(\alpha-1)}& \le &4(\alpha-1)\alpha^3 L^3 d^{3\alpha-4}\left(1+\frac{2\,L\, \alpha (\alpha-1) d^{\alpha-2}}{\mu}+
  \frac{c_0\, \alpha\, L\, d^2}{\mu \, 4(\alpha-1)}
 \right)
\\
&&  + c_5 \, \alpha^2\, L^2\, d^{2\alpha -3}\\
&& + c_6 \, \alpha^2\, L\, d^{\alpha -2} \\
 && +c_7 \, \alpha^2\,L^2\, d^{2\alpha -2}  \\
 &&+c_8 \, \alpha\, L \,d^{\alpha -1}\\
 &&+ c_9.
 \end{array}
\end{equation}
Using \eqref{newkey}  and $\alpha-1<0$ we rewrite it as
\begin{equation}\label{main112}
\begin{array}{rcl}
-C_9  L^\beta d^{\beta(\alpha-1)}& \le L^3 d^{3\alpha-4}&\left[ 2(\alpha-1)\alpha^3\right.
\\
&&  \ + \,c_5 \, \alpha^2\, L^{-1}\, d^{-\alpha +1}\\
&& \ + \,c_6 \, \alpha^2\, L^{-2}\, d^{-2 \alpha +2} \\
 && \ +\,c_7 \, \alpha^2\,L^{-1} \, d^{-\alpha +2}  \\
 &&\ +\,c_8 \, \alpha\, L^{-2}  \,d^{-2 \alpha +3}\\
 &&\left.\,+ c_9\, L^{-3} d^{-3\alpha+4}\right].
 \end{array}
\end{equation}
We now let $L\to\infty$ and use the fact that $L d^\alpha$ is bounded \eqref{zeroorder} to get 

$$ L^3 d^{3\alpha-4 } \le C_{10} L^\beta d^{\beta\alpha} d^{-\beta}\le (L d^\alpha)^\beta d^{-\beta}\le C_{11} d^{-\beta},$$

which implies the boundedness of $L^3 d^{3\alpha-4+\beta}$ as $L\to\infty$. 

Since $d\to0$ by  \eqref{referee} we obtain the desired  contradiction whenever $3\alpha-4-\beta<0$. 
  \addtocontents{toc}{\protect\thispagestyle{empty}}
\pagenumbering{gobble}
\setcounter{page}{1}
\bibliographystyle{alpha}
\bibliography{FerrariManfredi2020}
 \end{document}